\documentclass[10pt]{amsart}
\usepackage{amssymb,amstext,amsmath,amscd,amsthm,amsfonts,enumerate,latexsym,stmaryrd,multicol,geometry,graphicx,mathrsfs}
\usepackage{hyperref}
\usepackage[usenames]{color}
\usepackage[all]{xy}
\newtheorem{thm}{Theorem}[section]
\newtheorem{lem}[thm]{Lemma}
\newtheorem{prop}[thm]{Proposition}
\newtheorem{cor}[thm]{Corollary}
\theoremstyle{definition}
\newtheorem{dfn}[thm]{Definition}
\newtheorem{ques}[thm]{Question}

\newtheorem{rem}[thm]{Remark}

\theoremstyle{remark}

\newtheorem*{claim*}{Claim}

\newtheorem*{conv}{Convention}

\numberwithin{equation}{thm}
\def\ABdim{\operatorname{AB\text{-}dim}}
\def\ac{\operatorname{\mathtt{AC}}}

\def\cm{\operatorname{\mathtt{CM}}}
\def\CIdim{\operatorname{CI\text{-}dim}}
\def\depth{\operatorname{depth}}
\def\dtrc{\operatorname{\mathtt{DTRC}}}
\def\Ext{\operatorname{Ext}}
\def\fab{\operatorname{\mathtt{fab}}}
\def\fcx{\operatorname{\mathtt{fcx}}}
\def\fgd{\operatorname{\mathtt{fgd}}}
\def\fpd{\operatorname{\mathtt{fpd}}}
\def\G{\mathcal{G}}
\def\garc{\operatorname{\mathtt{GARC}}}
\def\gd{\operatorname{Gdim}}
\def\ge{\geqslant}
\def\geq{\geqslant}
\def\Gdim{\operatorname{G\text{-}dim}}
\def\grade{\operatorname{grade}}
\def\hExt{\operatorname{Ext}_R^{\gg0}}
\def\Hom{\operatorname{Hom}}

\def\I{\mathbb{I}}
\def\le{\leqslant}
\def\leq{\leqslant}

\def\mod{\operatorname{mod}}

\def\P{\mathbb{P}}
\def\p{\mathfrak{p}}
\def\pd{\operatorname{pd}}

\def\red{\operatorname{red}}

\def\ri{\operatorname{red-\mathbb{I}}}

\def\rABdim{\operatorname{red\text{-}AB\text{-}dim}}

\def\rCIdim{\operatorname{red\text{-}CI\text{-}dim}}
\def\rGdim{\operatorname{red\text{-}G\text{-}dim}}

\def\rpd{\operatorname{red\text{-}pd}}

\def\sred{\operatorname{{}^\ast red}}
\def\sri{\operatorname{{}^\ast red-\mathbb{I}}}

\def\Supp{\operatorname{Supp}}
\def\syz{\Omega}
\def\Tor{\operatorname{Tor}}
\def\tr{\operatorname{Tr}}
\def\ul{\operatorname{\mathtt{Ul}}}
\def\uac{\operatorname{\mathtt{UAC}}}
\def\X{\mathcal{X}}
\def\Y{\mathcal{Y}}
\def\Z{\mathbb{Z}}
\begin{document}
\title{Homological properties and finiteness of reducing invariants}
\author{Tokuji Araya}
\address[T.A.]{Department of Applied Science, Faculty of Science, Okayama University of Science, Ridaicho, Kitaku, Okayama 700-0005, Japan.}
\email{araya@ous.ac.jp}
\author{Naoya Hiramatsu}
\address[N.H.]{Institute for the Advancement of Higher Education, Okayama University of Science, 1-1 Ridai-cho, Kita-ku, Okayama 700-0005, Japan}
\email{n-hiramatsu@ous.ac.jp} 
\author{Ryo Takahashi}
\address[R.T.]{Graduate School of Mathematics, Nagoya University, Furocho, Chikusaku, Nagoya 464-8602, Japan}
\email{takahashi@math.nagoya-u.ac.jp}
\urladdr{https://www.math.nagoya-u.ac.jp/~takahashi/}
\subjclass[2020]{13C60, 13D05, 13D07}
\keywords{reducing invariant, (uniform) Auslander condition, Auslander--Reiten conjecture, totally reflexive module}
\thanks{N.H. was partly supported by JSPS KAKENHI Grant Number 25K06966. 
R.T. was partly supported by JSPS Grant-in-Aid for Scientific Research 23K03070}
\dedicatory{Dedicated to the memory of Professor Mitsuyasu Hashimoto}
\begin{abstract}
We study reducing invariants of modules related to certain homological properties. 
For modules of finite reducing projective dimension, we establish grade inequalities. 
We prove that if $\P$ is the (uniform) Auslander condition, or the generalized Auslander--Reiten conjecture, or dependence of the total reflexivity conditions, then a module satisfies $\P$ provided that it has finite reducing invariant with respect to $\P$.
\end{abstract}
\maketitle
\section{Introduction}\label{A}

Throughout the present paper, let $R$ be a commutative noetherian ring.
All $R$-modules are assumed to be finitely generated. 
The study of homological invariants of modules has been playing a central role in commutative algebra. 
Classical invariants, such as projective dimension, Gorenstein dimension, and complete intersection dimension, have been intensively investigated in connection with grade (in)equalities, vanishing of Ext and Tor, and the structures of Cohen--Macaulay rings and of Gorenstein rings.

Araya and Celikbas \cite{AC} introduced the notion of a reducing invariant, which is inspired by the notion of reducible complexity introduced by Bergh \cite{B}. 
It measures the extent to which a given module can be reduced, via repeated applications of syzygies or of certain short exact sequences, to a module having a desirable property, such as finite homological dimension. 
It has been studied by many authors \cite{red, CDKM22, CKLM24}.
To be more precise, let $\I_R$ be an {\em invariant} of $R$-modules, i.e., a map from the set of isomorphism classes of $R$-modules to the set $\Z\cup\{\pm\infty\}$.
Then for an $R$-module $M$ we write $\ri_R(M)<\infty$ if there exists a family
$$
\{0\to K_{i-1}^{\oplus a_i}\to K_i\to\syz^{n_i}K_{i-1}^{\oplus b_i}\to0\}_{i=1}^r
$$
of short exact sequences of $R$-modules with $r,n_i\ge0$,  $a_i,b_i>0$, $K_0=M$ and $\I_R(K_r)<\infty$.

Our first main result establishes a grade inequality for modules of finite reducing projective dimension.

\begin{thm}[Theorem \ref{B5}]\label{A1}
Let $R$ be a Cohen--Macaulay local ring.
Let $L,M,N$ be $R$-modules.
Suppose that $\rpd_RN<\infty$ and $\Tor_{>0}^R(M,N)=0$.
\begin{enumerate}[\rm(1)]
\item The inequality $\grade(L,M) \leq \grade(L, M \otimes N) + \depth R - \depth N$ holds.

\item Suppose that $\Supp L \subseteq \Supp N$. Then the inequality $\grade(L, M \otimes N) \leq \grade(L, N)$ holds. 
\end{enumerate}
\end{thm}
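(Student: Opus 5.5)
The plan is to argue by induction on the length $r$ of a reducing sequence for $N$, after reducing both parts to local depth/grade statements. First I record that Tor-vanishing propagates along the sequences
$$0\to K_{i-1}^{\oplus a_i}\to K_i\to\syz^{n_i}K_{i-1}^{\oplus b_i}\to0 .$$
If $\Tor_{>0}^R(M,K_{i-1})=0$, then $\Tor_{>0}^R(M,\syz^{n_i}K_{i-1})=0$ by dimension shifting, and hence $\Tor_{>0}^R(M,K_i)=0$ by the long exact sequence. So every $K_i$ is Tor-independent with $M$. Consequently each such sequence, and each syzygy sequence $0\to\syz^{j+1}K\to F\to\syz^jK\to0$, stays exact after applying $M\otimes_R-$. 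Set $d=\depth R$. Since $R$ is Cohen--Macaulay, the depth lemma gives $\depth\syz^nK\ge\depth K$, and therefore $\depth K_i\ge\depth K_{i-1}$.

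\emph{Part (1).} In the base case $\pd N=p<\infty$, I tensor a minimal free resolution of $N$ with $M$. By Tor-vanishing this gives an exact complex $0\to M^{\oplus b_p}\to\cdots\to M^{\oplus b_0}\to M\otimes N\to0$. Splitting it into short exact sequences, grade drops by at most one at each step, so $\grade(L,M\otimes N)\ge\grade(L,M)-p$. Auslander--Buchsbaum gives $p=d-\depth N$. For the inductive step, write $K=K_{i-1}$ and $K'=K_i$, and assume the claim for $K'$. Put $g=\grade(L,M)-(d-\depth K)$. Since $\depth K'\ge\depth K$, the induction hypothesis yields $\grade(L,M\otimes K')\ge g$.

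Now I combine two facts about grade along short exact sequences $0\to A\to B\to C\to 0$, namely $\grade A\ge\min\{\grade B,\grade C+1\}$ and its variants. From the tensored syzygy sequences I get
$$\grade(L,M\otimes\syz^nK)\ge\min\{\grade(L,M),\ \grade(L,M\otimes K)+n\}.$$
From the tensored reducing sequence I get
$$\grade(L,M\otimes K)\ge\min\{\grade(L,M\otimes K'),\ \grade(L,M)+1,\ \grade(L,M\otimes K)+n+1\}.$$
If $\grade(L,M\otimes K)$ is infinite there is nothing to prove. Otherwise the last term in the minimum strictly exceeds the left side, so it cannot attain the minimum. Hence $\grade(L,M\otimes K)\ge\min\{g,\grade(L,M)+1\}=g$, because $g\le\grade(L,M)$. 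This closes the induction.

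\emph{Part (2).} I use $\grade(L,X)=\inf\{\depth X_\p:\p\in\Supp L\}$. The hypothesis $\Supp L\subseteq\Supp N$ is preserved along the reduction, since $\Supp K\subseteq\Supp K'$. My plan is to first establish the depth formula $\depth(M\otimes N)=\depth M+\depth N-\depth R$ for Tor-independent $M,N$ with $\rpd N<\infty$, at every localization. Reducing invariants localize, since the short exact sequences localize. Given the depth formula, for $\p\in\Supp L$ with $M_\p\ne0$ the Cohen--Macaulay property gives $\depth M_\p\le\dim R_\p=\depth R_\p$, hence $\depth(M\otimes N)_\p\le\depth N_\p$. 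If $M_\p=0$, the term is $\infty$ and does not affect the infimum defining the left side. Since $N_\p\neq0$ for every $\p\in\Supp L$, the inequality $\grade(L,M\otimes N)\le\grade(L,N)$ follows.

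The main obstacle is the inductive step for the depth formula, which needs an equality rather than an inequality. Part (1) already gives $\ge$. For the reverse inequality, I would try to show $\depth(M\otimes K)\le\depth M+\depth K-d$ by contradiction. I would compare depths in the tensored reducing sequence, using the equality for $K'$ and the fact that $\depth(M\otimes\syz^nK)$ is controlled by $\depth(M\otimes K)$ via the syzygy sequences. If this does not go through, I would instead prove (2) directly by an inductive Ext argument: nonvanishing of $\Ext^{t}(L,M\otimes K')$ at $t=\grade(L,N)$ should force nonvanishing of some $\Ext$ into $M\otimes K$ in degree at most $t$, with $\Supp L\subseteq\Supp N$ guaranteeing that the relevant grades are finite.
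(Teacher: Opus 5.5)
Part (2) breaks at the sentence ``If $M_\p=0$, the term is $\infty$ and does not affect the infimum defining the left side.'' Those primes do matter. Every $\p\in\Supp L$ contributes the finite number $\depth N_\p$ to $\grade(L,N)$, but contributes $\infty$ to $\grade(L,M\otimes N)$ when $\p\notin\Supp M$. So your pointwise bound on $\Supp L\cap\Supp M$ says nothing once $\grade(L,N)$ is attained outside $\Supp M$.

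This cannot be repaired, because the inequality as stated is false. Take $M=0$, or, non-degenerately, $R=k[\![x,y]\!]/(xy)$, $N=R$, $M=R/(y)$, $L=R/(x)$. Then $\rpd N=0$, $\Tor_{>0}(M,N)=0$ and $\Supp L\subseteq\Supp N$. Yet $\grade(L,M\otimes N)=1$, because $\Hom_R(R/(x),R/(y))=0\ne\Ext^1_R(R/(x),R/(y))$. On the other hand $\grade(L,N)=0$, because $\Hom_R(R/(x),R)\cong yR\ne0$.

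The paper's own proof has the same defect. It invokes Theorem~\ref{GI} for $K_r$ to get $\grade(L,M\otimes K_r)\le\grade(L,K_r)$, but Theorem~\ref{GI} only gives $\grade(L,M\otimes K_r)\le\grade(L,M)$. The correct statement is $\grade(L,M\otimes N)\le\grade(L,M)$ under $\Supp L\subseteq\Supp N$; that is also what Remark~\ref{B7} actually tests. Your localization argument proves it if you use $\depth N_\p\le\depth R_\p$ instead of $\depth M_\p\le\depth R_\p$:
\begin{itemize}
\item primes of $\Supp L$ outside $\Supp M$ give $\infty$ on both sides;
\item for the remaining primes, $\Supp L\subseteq\Supp N$ ensures $(M\otimes N)_\p\ne0$, so the depth formula applies.
\end{itemize}
The same argument gives $\le\grade(L,N)$ if one assumes $\Supp L\subseteq\Supp M$ instead. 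You also need not reprove the depth formula, whose inductive proof you only sketch: it is Theorem~\ref{DF}, and finiteness of $\rpd$ localizes by [CKLM24, Lemma~2.5]. The Ext fallback you mention is aimed at the false inequality and cannot succeed.

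Your part (1) is correct and genuinely different from the paper's proof. The paper localizes at each $\p\in\Supp L\cap\Supp(M\otimes N)$ and applies the depth formula (Theorem~\ref{DF}) to both $(M,N)$ and $(M,K_r)$. It then uses $\depth N_\p=\depth(K_r)_\p$ (Theorem~\ref{B3}) to get $\depth M_\p=\depth(M\otimes N)_\p+\pd(K_r)_\p$. Finally it bounds $\pd(K_r)_\p\le\pd K_r=\depth R-\depth N$.

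Your descending induction works instead with grade estimates across the tensored syzygy and reducing sequences, using the argument behind Theorem~\ref{GI} only in the finite-projective-dimension base case. It is global and self-contained: it needs only Tor-propagation, the depth lemma and the inequality $\depth K_i\ge\depth K_{i-1}$. It does not use the depth formula for modules of finite reducing projective dimension, nor localization. The paper's route, in exchange, yields an exact pointwise identity rather than just the inequality.
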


We also investigate the relationships between reducing invariants and several homological properties.
Recall that an $R$-module $M$ is said to satisfy:
\begin{itemize}
\item
the {\em Auslander condition}, provided that there exists an integer $b_M\ge0$ such that for every $R$-module $N$, if $\Ext_R^{\gg0}(M,N)=0$, then $\Ext_R^{>b_M}(M,N)=0$.
\item
the {\em uniform Auslander condition}, provided that there exists an integer $b_M\ge0$ such that for every nonzero $R$-module $N$, if $\Ext_R^{\gg0}(M,N)=0$, then $\Ext_R^{>b_M}(M,N)=0$ and $\Ext_R^{b_M}(M,N)\neq0$.
\item
the {\em generalized Auslander--Reiten conjecture}, provided that if there exists an integer $n\ge0$ such that $\Ext_R^{>n}(M,M\oplus R)=0$, then $M$ has projective dimension at most $n$.
\item
{\em dependence of the total reflexivity conditions}, provided if $\Ext_R^{>0}(M,R)=0$, then $M$ is totally reflexive.
\end{itemize}

Now we are able to state the second main result of this paper.
This theorem says that, for instance, when we want to show that every $R$-module $M$ with $\I_R(M)<\infty$ satisfies dependence of total reflexivity conditions, we have only to verify that every $R$-module $M$ with $\ri_R(M)<\infty$ satisfies that property.

\begin{thm}[Corollary \ref{13}]\label{5}
Let $\I_R$ be an invariant of $R$-modules.
Consider the properties of modules:
\begin{enumerate}[\qquad\rm(i)]
\item
being maximal Cohen--Macaulay,
\item
being Ulrich,
\item
the Auslander condition, 
\item 
the uniform Auslander condition,
\item
the generalized Auslander--Reiten conjecture, 
\item
dependence of the total reflexivity conditions.
\end{enumerate}\begin{enumerate}[\rm(1)]
\item
Let $\P$ be one of the properties {\rm(iii)}, {\rm(iv)}, {\rm(v)} and {\rm(vi)}.
Assume that any $R$-module $M$ such that $\I_R(M)<\infty$ satisfies $\P$.
Then any $R$-module $M$ such that $\ri_R(M)<\infty$ satisfies $\P$ as well.
\item
Suppose that $R$ is a Cohen--Macaulay local ring.
Let $\P$ be either of the properties {\rm(i)} and {\rm(ii)}.
If all $R$-modules $M$ with $\I_R(M)<\infty$ satisfy $\P$, then so are all $R$-modules $M$ with $\ri_R(M)<\infty$.
\end{enumerate}
\end{thm}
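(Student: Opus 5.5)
By induction on the length $r$ of the reducing family, it suffices to prove a one-step closure statement for each property $\P$. Namely: if $0\to K^{\oplus a}\to L\to \syz^n K^{\oplus b}\to 0$ is exact with $a,b>0$, $n\ge0$, and $L$ satisfies $\P$, then $K$ satisfies $\P$. Applying this with $K=K_{r-1}$, $L=K_r$, and descending to $K_0=M$ gives the theorem, since $K_r$ satisfies $\P$ by hypothesis. I would isolate this step as a lemma for each of (i)--(vi).

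For (iii) and (iv), fix $N$ and use the long exact sequence of $\Ext_R(-,N)$ together with $\Ext^i_R(\syz^nK,N)\cong\Ext^{i+n}_R(K,N)$ for $i>0$, and additivity in direct sums.

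If $\Ext^{\gg0}_R(K,N)=0$, then $\Ext^{\gg0}_R(L,N)=0$, so $\Ext^{>b_L}_R(L,N)=0$. For $i>b_L+1$ the sequence
$$\Ext^{i}(L,N)\to\Ext^{i}(K,N)^{\oplus a}\to\Ext^{i+n+1}(K,N)^{\oplus b}\to\Ext^{i+1}(L,N)$$
yields $\Ext^i(K,N)^{\oplus a}\cong\Ext^{i+n+1}(K,N)^{\oplus b}$. Iterating upward, each such group eventually vanishes, so $\Ext^{>b_L+1}(K,N)=0$ and we may take $b_K=b_L+1$.

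For the uniform condition, I would also need nonvanishing at exactly $b_K$. Let $s$ be the largest index with $\Ext^s(K,N)\ne0$. The sequence shows that $\Ext^{j}(L,N)$ is nonzero at a controlled index: near $s$ (when $n\geq 0$ the cokernel term $\Ext^{s-n}$ from $\syz^nK$ feeds into $\Ext^{s-n}(L,N)$). So $s$ is determined by the uniform value $b_L$ up to the shift $n$. Since $b_L$ does not depend on $N$, $s$ is constant in $N$. Making this bookkeeping precise, handling both the case where the top nonvanishing comes from the $K^{\oplus a}$ term and the case where it comes from the syzygy term, is where I expect the main obstacle.

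For (v), suppose $\Ext^{>m}(K,K\oplus R)=0$. The same long exact sequences, in both variables and with $\Hom(L,-)$ applied to the defining sequence, give $\Ext^{\gg0}(L,L\oplus R)=0$. Hence $\pd L<\infty$ by (v) for $L$, with some bound. Then the isomorphisms above with $N$ arbitrary give $\Ext^i(K,N)^{\oplus a}\cong \Ext^{i+n+1}(K,N)^{\oplus b}$ for $i>\pd L$, which vanish by induction on the top. So $\pd K<\infty$, and the Auslander--Buchsbaum type argument $\pd K=\sup\{i:\Ext^i(K,R)\neq0\}\le m$ gives the required bound.

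For (vi), $\Ext^{>0}(K,R)=0$ gives $\Ext^{>0}(L,R)=0$, since both ends of the sequence have vanishing Ext into $R$. So $L$ is totally reflexive. Then $K^{\oplus a}$ is a kernel of an epimorphism between modules with $\Ext^{>0}(-,R)=0$, with $L$ in $\G$. One then shows that $K$ lies in $\G$ using closure of totally reflexive modules under direct summands and the two-out-of-three property, after splicing with a $\G$-resolution of $\syz^nK$.

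For (i) and (ii) over Cohen--Macaulay $R$: if $L$ is MCM, the depth lemma gives $\depth K\ge \min(\depth L,\depth\syz^nK+1)$. Writing $d=\dim R$, we have $\depth \syz^nK\ge\min(d,\depth K+n)$, which forces $\depth K=d$. Ulrichness then follows by comparing multiplicities and minimal numbers of generators along the exact sequence, using additivity of $e$ and subadditivity of $\mu$.
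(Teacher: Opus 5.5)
Your reduction to a one-step closure statement (the paper's Lemma~\ref{1}) and your arguments for (i), (ii) and (iii) are sound and agree with the paper. However, there are genuine gaps in (v) and (vi), and (iv) is not actually proved.

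In (v), the claim that the isomorphisms $\Ext^i_R(K,N)^{\oplus a}\cong\Ext^{i+n+1}_R(K,N)^{\oplus b}$ for $i>\pd L$ ``vanish by induction on the top'' has no basis. For arbitrary $N$ nothing gives $\Ext^{\gg0}_R(K,N)=0$. In fact $\pd L<\infty$ alone does not force $\pd K<\infty$: over $R=k[x]/(x^2)$ the sequence $0\to k\to R\to k\to0$ has $n=0$, $L=R$ and $\pd k=\infty$. The hypothesis $\Ext^{>m}_R(K,K)=0$ has to enter, and the paper uses it at the level of modules:
\begin{itemize}
\item For $p\ge\pd L$, the horseshoe lemma and Schanuel give $(\syz^pK)^{\oplus a}\cong(\syz^{p+n+1}K)^{\oplus b}$ up to projective summands.
\item Iterating, $(\syz^pK)^{\oplus a^l}\cong(\syz^qK)^{\oplus b^l}$ with $q=p+l(n+1)$.
\item Choose $l(n+1)>m$ and $X=\syz^{q-1}K$, and shift in the second variable (legitimate since $\Ext^{>m}_R(K,R)=0$). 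This gives $\Ext^1_R(X,\syz X)^{\oplus b^l}\cong\Ext^q_R(K,\syz^pK)^{\oplus a^l}\cong\Ext^{l(n+1)}_R(K,K)^{\oplus a^l}=0$.
\item So $X$ is projective, $\pd K<\infty$, and your last step gives $\pd K\le m$.
\end{itemize}

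For (iv), the right choice is $b_K=b_L$, not $b_L+1$. The isomorphism above already holds for all $i>b_L$, so $\Ext^{>b_L}_R(K,N)=0$. Suppose $\Ext^{b_L}_R(K,N)$ also vanished. Then $\Ext^{b_L}_R(\syz^nK,N)\cong\Ext^{b_L+n}_R(K,N)=0$, and the exact sequence $\Ext^{b_L}_R(\syz^nK,N)^{\oplus b}\to\Ext^{b_L}_R(L,N)\to\Ext^{b_L}_R(K,N)^{\oplus a}$ would give $\Ext^{b_L}_R(L,N)=0$, contradicting $L\in\uac(R)$. So the top index is exactly $b_L$ for every such $N$. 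Knowing it only ``up to the shift $n$'' would not make it independent of $N$. (The shift needs $b_L\ge1$ or $n=0$. If $b_L=0$, you need another reason for $\Hom_R(K,N)\ne0$; over a local ring, for instance, it follows from $\Ext^{>0}_R(K,N)=0$ and $K,N\ne0$.)

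In (vi), the two-out-of-three property needs two of the three terms to be totally reflexive, but only $L$ is known to be. A finite $\G$-resolution of $\syz^nK$ presupposes $\Gdim_R\syz^nK<\infty$, which is exactly what is at stake. Since $\Ext^{>0}_R(M,R)=0$ does not imply total reflexivity in general, no argument using only this vanishing and formal closure properties can work; you have to exploit the shape of the sequence. The paper's idea is a bootstrap through the transpose:
\begin{itemize}
\item Because $\Ext^1_R(\syz^nK,R)\cong\Ext^{n+1}_R(K,R)=0$, the $R$-dual of the sequence is again short exact.
\item \cite[Lemma 3.9]{AB} then yields $0\to(\tr\syz^nK)^{\oplus b}\to\tr L\to(\tr K)^{\oplus a}\to0$ with $\tr L$ totally reflexive.
\item Hence $\Ext^i_R(\tr K,R)^{\oplus a}\cong\Ext^{i-1}_R(\tr\syz^nK,R)^{\oplus b}$ for $i\ge2$.
\item If $K\in\G_{\infty,m}$, then $\syz^nK\in\G_{\infty,m+n}$, so $\tr\syz^nK\in\G_{m+n,\infty}$ and these groups vanish for $2\le i\le m+n+1$.
\item Moreover $\Ext^1_R(\tr K,R)=0$, because $K$ embeds in $L$ and is therefore torsionless.
\end{itemize}
Thus $K\in\G_{\infty,m}$ implies $K\in\G_{\infty,m+n+1}$. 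Iterating from $K\in\G_{\infty,0}$ gives $K\in\G_{\infty,\infty}$, i.e.\ $K$ is totally reflexive.
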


Applying the above theorem, we provide some classes of $R$-modules which satisfy the conditions {\rm(iii)}, {\rm(iv)}, {\rm(v)} and {\rm(vi)} in Theorem \ref{5}. 
We obtain the following result.

\begin{cor}[Corollary \ref{E3}]
Let $M$ be an $R$-module which has reducible complexity. 
Then $M$ satisfies the (uniform) Auslander condition, the (generalized) Auslander--Reiten conjecture and dependence of the total reflexivity conditions.
\end{cor}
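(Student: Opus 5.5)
The corollary should follow by applying Theorem \ref{5}(1) with the invariant $\I_R=\CIdim_R$, or better $\I_R=\pd_R$. Recall Bergh's definition: $M$ has reducible complexity if either $\pd_RM<\infty$, or $\operatorname{cx}_RM<\infty$, $M$ has positive complexity, and there is an exact sequence
$$0\to M\to K\to \syz^{n}M\to 0$$
with $K$ of reducible complexity and $\operatorname{cx}K<\operatorname{cx}M$ (and a depth condition).

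\textbf{Step 1.} I will show that reducible complexity implies $\rpd_RM<\infty$, by induction on complexity. If $\operatorname{cx}M=0$, then $\pd M<\infty$ and $r=0$ suffices. Otherwise Bergh's sequence is a sequence of the form
$$0\to K_0^{\oplus1}\to K_1\to\syz^{n}K_0^{\oplus1}\to0.$$
By induction, $K_1$ admits a chain of such sequences ending at a module of finite projective dimension. Concatenating the first sequence with the chain for $K_1$ gives a family witnessing $\rpd_RM<\infty$. The indexing in the definition of $\ri$ needs each step to use the previous $K_{i-1}$, and that is exactly what the concatenation provides.

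\textbf{Step 2.} I will verify that every module of finite projective dimension satisfies (iii)--(vi).
\begin{itemize}
\item For (iii), set $b_M=\pd M$.
\item For (iv), with $p=\pd M$ and any nonzero $N$: $\Ext^p(M,N)\ne0$ for all such $N$. Indeed, $\Ext^p(M,-)$ is right exact and nonzero on $R/\mathfrak m$ after localizing at a prime where the projective dimension is attained. This needs care, since $R$ need not be local. I may instead define $b_M$ prime by prime, or restrict to the local case as the paper presumably does.
\item For (v), $\pd M<\infty$ together with $\Ext^{>n}(M,R)=0$ gives $\pd M\le n$. This is standard: $\pd M=\sup\{i:\Ext^i(M,R)\ne0\}$, checked locally.
\item For (vi), if $\pd M<\infty$ and $\Ext^{>0}(M,R)=0$, then $M$ is projective, hence totally reflexive.
\end{itemize}

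\textbf{Step 3.} Theorem \ref{5}(1) now transfers (iii)--(vi) from modules with $\pd<\infty$ to modules with $\rpd<\infty$. Combined with Step 1, this covers all modules of reducible complexity. The Auslander--Reiten conjecture is the special case $n=0$ of (v).

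I expect the main obstacle to be the uniform Auslander condition in the non-local setting in Step 2. There the nonvanishing of $\Ext^{b_M}(M,N)$ must hold for \emph{every} nonzero $N$. If this fails for finite projective dimension globally, I would instead take $\I_R=\pd_R$ under the standing local hypothesis, or note that reducible complexity is defined over local rings. In that case $\Ext^p_R(M,N)\ne0$ for $p=\pd M$ and every nonzero $N$, by Nakayama applied to the last map of the minimal free resolution tensored with $N$.
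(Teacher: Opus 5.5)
Your proposal is correct and follows the paper's own route. The paper likewise says that reducible complexity gives finite reducing projective dimension; your induction on complexity spells out its ``by the definition''. It then checks, in the proof of Corollary \ref{E1}, that modules of finite projective dimension have the properties, citing $\uac(R)\subseteq\ac(R)\subseteq\garc(R)$ rather than checking each one, and concludes with Corollary \ref{13}.

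Your Nakayama check that $\Ext_R^{\pd M}(M,N)\neq0$ for all $N\neq0$, and your insistence that $R$ be local, fill a step the paper glosses over. Over $\mathbb{Z}$, for instance, $\mathbb{Z}/2$ fails the uniform condition, since $\Ext_{\mathbb{Z}}^i(\mathbb{Z}/2,\mathbb{Z}/3)=0$ for all $i$. One caveat remains, shared with the paper: that check needs $M\neq0$, since the zero module fails the uniform Auslander condition. So for that property, apply Corollary \ref{8} to the chain, whose end $K_r$ contains a copy of $M$ and is therefore nonzero, rather than Corollary \ref{13} verbatim.
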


The organization of the paper is as follows. 
In Section \ref{B}, we introduce a modified version of reducing invariants (Definition \ref{maindef}) and prove Theorem \ref{A1}.
In Section \ref{C}, we define a closure operator on subcategories of $\mod R$ with respect to reducing invariants (Definition \ref{maindef2}), and in Section \ref{D}, we discuss the properties of this operator for subcategories related to projective or Gorenstein dimension.
Section \ref{E} is devoted to proving Theorem \ref{5} and providing classes of modules to which the theorem applies (Corollaries \ref{E1}, \ref{E2}, and \ref{E3}).
\begin{conv}
Throughout, let $R$ be a commutative noetherian ring.
Denote by $\mod R$ the category of finitely generated $R$-modules, and by $\mod R/\cong$ the set of isomorphism classes of finitely generated $R$-modules.
We assume that all $R$-modules are finitely generated, and that all subcategories of $\mod R$ are strictly full.
Hence, there is a one-to-one correspondence between the subcategories of $\mod R$ and the subsets of $\mod R/\cong$.
\end{conv}
\section{Grade inequality}\label{B}

The notion of a reducing invariant has been introduced by Araya and Celikbas \cite{AC}. 
We adopt the modified version in this paper.

\begin{dfn}\label{maindef}
Let $\I_R :(\mod R/\cong)\to\Z\cup\{\pm\infty\}$ be a map.
Let $M$ be an $R$-module.
We define the {\em reducing $\I_R$-invariant} of $M$, denoted $\ri_RM$, as follows.
\begin{enumerate}[(1)]
\item
One has $\I_R(M)<\infty$ if and only if $\ri_R(M)=0$.
\item
Suppose that $\I_R(M)=\infty$.
If there exists a family $\{0\to K_{i-1}^{\oplus a_i}\to K_i\to\syz^{n_i}K_{i-1}^{\oplus b_i}\to0\}_{i=1}^r$ of exact sequences of $R$-modules such that $n_i\ge0$, $r,a_i,b_i>0$, $K_0=M$ and $\I_R(K_r)<\infty$, then $\ri_RM$ is the minimum of such integers $r$.
\item
If no such exact sequences as in (2) exist, then $\ri_RM=\infty$.
\end{enumerate}
\end{dfn}

It is nothing but $\ri_R M$ for the reducing invariants of ${\mathbb{I}}_{R} = \{ \pd , \CIdim, \ABdim, \Gdim  \}$ are the same as those introduced in \cite{AC} with respect to a homological invariant of $R$-modules.

A fundamental problem in homological algebra is to determine when the depth formula holds.

\begin{dfn}
We say that the \emph{depth formula} holds for $R$-modules $M$ and $N$ if there is an equality
\begin{equation}\label{depth formula}
\depth M + \depth N= \depth R + \depth(M \otimes N)  
\end{equation}
\end{dfn}

In the context of reducing invariants, the following theorem appears in \cite{CKLM24}.
Let $M$ and $N$ be $R$-modules. 
We define the integer as follows: 
$$
q^R(M, N) = \sup \{ i | \Tor _i ^ R (M, N)\not= 0 \}.   
$$

\begin{thm}\label{DF}\cite[Theorem 1.2]{CKLM24}
Let $R$ be a Cohen--Macaulay local ring and let $M$ and $N$ be $R$-modules with $q^R(M, N)=0$. 
If $\rpd N < \infty$ then the depth formula holds for $M$ and $N$. 
\end{thm}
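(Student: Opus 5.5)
The plan is to argue by induction on $r=\rpd_RN$. The base case $r=0$ means $\pd_RN<\infty$. There the depth formula for Tor-independent modules is Auslander's classical theorem, so nothing more is needed. For $r>0$, choose a reducing sequence $\{0\to K_{i-1}^{\oplus a_i}\to K_i\to\syz^{n_i}K_{i-1}^{\oplus b_i}\to0\}_{i=1}^r$ with $K_0=N$ and $\pd K_r<\infty$. Then $K_1$ has $\rpd_RK_1\le r-1$, and the induction hypothesis should give the depth formula for $M$ and $K_1$. The work is to transfer the formula from $K_1$ back to $K_0=N$ through the single sequence
$$0\to N^{\oplus a}\to K_1\to \syz^nN^{\oplus b}\to0 .$$

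First I transfer Tor-vanishing. The syzygy sequences $0\to\syz^{j+1}N\to F_j\to\syz^jN\to0$ show $\Tor^R_{>0}(M,\syz^nN)=0$. Combined with $\Tor^R_{>0}(M,N)=0$, the long exact Tor sequence gives $\Tor^R_{>0}(M,K_1)=0$, so $q^R(M,K_1)=0$. Moreover, tensoring each syzygy sequence and the reducing sequence with $M$ leaves them exact:
$$0\to M\otimes\syz^{j+1}N\to M\otimes F_j\to M\otimes\syz^jN\to0,\qquad 0\to (M\otimes N)^{\oplus a}\to M\otimes K_1\to (M\otimes\syz^nN)^{\oplus b}\to0 .$$

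Second, I show that the depth formula for $(M,N)$ is equivalent to the depth formula for $(M,\syz^nN)$. Put $d=\depth R$; since $R$ is Cohen--Macaulay, $d=\dim R$. By the depth lemma, passing from $N$ to $\syz N$ raises $\depth N$ by one while $\depth N<d$, and caps it at $d$. Likewise, passing from $M\otimes N$ to $M\otimes\syz N$ raises $\depth(M\otimes N)$ by one while it is below $\depth M$, because $\depth(M\otimes F)=\depth M$. The formula itself forces $\depth(M\otimes N)=\depth M+\depth N-d$. Hence both sides of the depth formula shift by the same amount, and the formula is preserved in both directions. The boundary cases need care: these are when $\depth N=d$, or when the depth lemma gives only an inequality. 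In those cases I would use $\depth(M\otimes N)\le\depth M$ and $\depth(M\otimes N)\le \dim$ bounds coming from Cohen--Macaulayness to pin down equality.

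Third, and this is the main obstacle, I combine the depth lemma on the tensored reducing sequence with the formulas already known for $(M,K_1)$ and $(M,\syz^nN)$. The depth lemma yields only inequalities. It gives $\depth(M\otimes N)\ge\min\{\depth(M\otimes K_1),\,\depth(M\otimes\syz^nN)+1\}$ and similar bounds, which need not determine $\depth(M\otimes N)$ exactly.

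To get around this I first try enlarging $n$. A longer reducing sequence can be obtained by pulling back along syzygies, or one may replace $N$ by a high syzygy, which the second step allows. Either way I may assume $\depth\syz^nN=d$ and $\depth(M\otimes\syz^nN)=\depth M$. Then $\depth N$ and $\depth K_1$ are squeezed between quantities computed from the sequence, and a case split shows that the only consistent value is $\depth(M\otimes N)=\depth M+\depth N-d$. The two cases are $\depth N<\depth K_1$ and $\depth N\ge \depth K_1$.

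If this bookkeeping fails, the fallback is to rephrase everything in terms of the derived depth $\depth(M\otimes^{\mathbf L}_R -)$, which is computed with local cohomology. Tor-independence identifies $M\otimes^{\mathbf L}_R N$ with $M\otimes_RN$. The depth formula becomes the statement that $\depth(M\otimes^{\mathbf L}X)-\depth X$ equals the constant $\depth M-d$. This quantity can then be compared along exact triangles using long exact sequences in local cohomology rather than bare depth inequalities.
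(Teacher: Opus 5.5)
The paper gives no proof of this statement (it is quoted from CKLM24), so your argument has to stand on its own. As written it has a genuine gap in Step 3, and the boundary cases of Step 2 are the same gap. Your base case and the transfer of Tor-vanishing are fine.

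\textbf{The gap.} Step 3 takes $\depth(M\otimes\syz^nN)=\depth M$, with $\syz^nN$ maximal Cohen--Macaulay, as known. That is the depth formula for $(M,\syz^nN)$, which by your own Step 2 is equivalent to what you are proving; induction only gives the formula for $(M,K_1)$. Even granting it, the depth lemma cannot finish. Put $t=\depth M$. After replacing $N$ by a high syzygy, $N$, $K_1$ and $\syz^nN$ are all maximal Cohen--Macaulay. The values $\depth(M\otimes N)=t+1$ and $\depth(M\otimes K_1)=\depth(M\otimes\syz^{j}N)=t$ for $j\ge1$ satisfy every depth-lemma inequality. This holds both for $0\to(M\otimes N)^{\oplus a}\to M\otimes K_1\to(M\otimes\syz^nN)^{\oplus b}\to0$ and for the tensored syzygy sequences $0\to M\otimes\syz^{j+1}N\to M^{\oplus m_j}\to M\otimes\syz^jN\to0$. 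So no case split on depths can force $\depth(M\otimes N)=t$.

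Your repair for the boundary cases is an inequality $\depth(M\otimes X)\le\depth M$ for Tor-independent $X$. It does not follow from $R$ being Cohen--Macaulay: $M$ need not be Cohen--Macaulay, so dimension bounds do not give it. For maximal Cohen--Macaulay $X$ it is exactly the nontrivial half of the depth formula. The derived-depth fallback reads off the same long exact sequences in local cohomology, so it reproduces the same inequalities and supplies no new input.

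\textbf{Where the difficulty actually lies.} Over a Cohen--Macaulay ring, $\depth K_1=\depth N$ (the argument of Theorem~\ref{B3}). Write $v_j=\depth(M\otimes\syz^jN)$. The syzygy sequences give $v_j<t\Rightarrow v_{j+1}=v_j+1$, and $v_j\ge t\Rightarrow v_{j+1}\ge t$. Feed this into the three depth-lemma inequalities for the tensored reducing sequence, using $\depth(M\otimes K_1)=t+\depth N-\depth R$ from induction. This already yields $\depth(M\otimes N)\ge t+\depth N-\depth R$ in general, with equality whenever $\depth N<\depth R$, and requires no passage to syzygies.

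So everything hinges on the case where $N$ is maximal Cohen--Macaulay. Then every $K_i$ is maximal Cohen--Macaulay and $K_r$ is free, and you need a genuinely new argument ruling out $\depth(M\otimes N)>\depth M$. Your reduction to high syzygies moves you into exactly this case rather than around it, and nothing in the proposal handles it.
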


For $R$-modules $L$ and $M$, we define $\grade (L, M)$ by $\inf \{ i | \Ext _R ^ i (L, M) \not= 0 \}$. 
Here we define $\grade (L, 0) = \infty$. 
We remark that $\grade (L, M) = \inf \{ \depth M_{\p} | \p \in \Supp L\}$ (cf. \cite[Proposition 1.2.10]{BH}).

\begin{thm}\label{GI}\cite[Theorem 3.1]{AY98}
Let $M$ and $N$ be $R$-modules and suppose that $\pd N < \infty$ and $q^R (M, N) = 0$. 
Then, for any $R$-module $L$, the following inequality holds: 
\begin{equation}\label{grade inequality}
\grade (L, M \otimes N ) \leq \grade (L, M) \leq \grade (L, M \otimes N) + \pd N.  
\end{equation}
\end{thm}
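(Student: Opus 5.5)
The plan is to localize and reduce everything to Auslander's depth formula for Tor-independent modules, one of which has finite projective dimension. We use the description $\grade(L,X)=\inf\{\depth X_{\p}\mid \p\in\Supp L\}$ recalled above. The hypotheses localize: for every prime $\p$ we have $\Tor^{R_{\p}}_{>0}(M_{\p},N_{\p})=0$ and $\pd_{R_{\p}}N_{\p}\le\pd_RN<\infty$.

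Fix $\p$ with $M_{\p}\neq0\neq N_{\p}$. Tor-independence gives $(M\otimes N)_{\p}\neq0$, by Nakayama and the fact that $\Tor_0$ of nonzero modules over a local ring is nonzero. Auslander's depth formula (Auslander, 1961) gives
$$\depth M_{\p}+\depth N_{\p}=\depth R_{\p}+\depth(M\otimes N)_{\p}.$$
Combining this with Auslander--Buchsbaum, $\depth R_{\p}-\depth N_{\p}=\pd N_{\p}$, yields
$$\depth(M\otimes N)_{\p}=\depth M_{\p}-\pd N_{\p}.$$
Hence $\depth(M\otimes N)_{\p}\le\depth M_{\p}\le\depth(M\otimes N)_{\p}+\pd_RN$. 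I would cite the depth formula rather than reprove it. The naive route, tensoring a minimal free resolution of $N$ with $M$ (exact by Tor vanishing) and applying the depth lemma, only gives $\depth(M\otimes N)_{\p}\ge\depth M_{\p}-\pd N_{\p}$, which is the wrong direction for the second inequality. The reverse bound is the genuine content of Auslander's theorem and is the main ingredient.

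For the second inequality $\grade(L,M)\le\grade(L,M\otimes N)+\pd N$, we may assume the right side is finite. Choose $\p\in\Supp L$ with $\depth(M\otimes N)_{\p}=\grade(L,M\otimes N)<\infty$. Then $(M\otimes N)_{\p}\ne0$, so $M_{\p},N_{\p}\ne0$. The displayed inequalities give $\grade(L,M)\le\depth M_{\p}\le\grade(L,M\otimes N)+\pd N$.

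For the first inequality, choose $\p\in\Supp L$ realizing $\grade(L,M)=\depth M_{\p}$, so $M_{\p}\ne0$. If also $N_{\p}\ne0$, then $\grade(L,M\otimes N)\le\depth(M\otimes N)_{\p}\le\depth M_{\p}$ and we are done. The obstacle is the case $N_{\p}=0$, and here a support hypothesis is really needed. For example, take $R=k[x]$, $M=R$, $N=R/(x)$ and $L=R/(x-1)$. Then $\grade(L,M\otimes N)=\infty$ while $\grade(L,M)=1$. So I would prove the left inequality under the tacit assumption $\Supp L\cap\Supp M\subseteq\Supp N$, for instance $\Supp L\subseteq\Supp N$, as in Theorem \ref{A1}(2). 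Under that assumption every $\p$ used above has $N_{\p}\ne0$, and the argument goes through. The right inequality holds with no support assumption.
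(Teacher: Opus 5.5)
The paper gives no proof of this statement; it is quoted from Araya--Yoshino. Your proof of the right-hand inequality is correct. There is one slip in your commentary. Tensoring a finite free resolution of $N$ with $M$ and applying the depth lemma yields $\depth M_{\p}\le\depth(M\otimes N)_{\p}+\pd N_{\p}$. That is exactly the right-hand inequality, so this half does not need Auslander's theorem at all. What the depth formula genuinely contributes is the pointwise left-hand bound $\depth(M\otimes N)_{\p}\le\depth M_{\p}$ at primes $\p\in\Supp M\cap\Supp N$.

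The gap is in the left-hand inequality. Your counterexample is correct, but it works only because $k[x]$ is not local. The result is meant for a local ring: it is applied in the paper only over the Cohen--Macaulay local ring of Theorem \ref{B5}. Over a local ring the left-hand inequality holds with no support hypothesis (for $N\neq0$). So your claim that a support hypothesis is really needed is false in the intended setting, and your proposal leaves exactly the case $N_{\p}=0$ unproved.

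The missing idea is to pass to a larger prime. Let $\p\in\Supp L$ with $\depth M_{\p}=\grade(L,M)<\infty$ and $N_{\p}=0$. Since $R$ is local, $\p+\operatorname{ann}N$ is a proper ideal; let $\mathfrak q$ be a minimal prime of it. Then $\mathfrak q\in\Supp L\cap\Supp M\cap\Supp N$, and $N_{\mathfrak q}\otimes(R/\p)_{\mathfrak q}$ has finite length. The Intersection Theorem (Peskine--Szpiro, Roberts) therefore gives $\dim(R/\p)_{\mathfrak q}\le\pd N_{\mathfrak q}$. Combine this with the standard bound $\depth M_{\mathfrak q}\le\depth M_{\p}+\dim(R/\p)_{\mathfrak q}$ and the depth formula at $\mathfrak q$:
\[
\grade(L,M\otimes N)\le\depth(M\otimes N)_{\mathfrak q}=\depth M_{\mathfrak q}-\pd N_{\mathfrak q}\le\depth M_{\p}=\grade(L,M).
\]
An input of this strength cannot be avoided. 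For $M=R$ and $L=R/xR$, the left-hand inequality says that every $N$-regular element is $R$-regular when $\pd N<\infty$. That is Auslander's zero-divisor conjecture (now a theorem), which the depth formula alone does not yield. Remark \ref{B7} shows this very step failing when $\pd$ is replaced by $\CIdim$: there $R/(x)\otimes R/(y)$ has finite length, yet $\dim R/(x)=1>0=\CIdim R/(y)$.
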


The inequality (\ref{grade inequality}) is called {\it grade inequality}. 
In this section, we investigate the reducing projective dimension version of Theorem \ref{GI}.  
Actually, we show the following theorem.

\begin{thm}\label{B5}
Let $R$ be a Cohen--Macaulay local ring and $L$, $M$, $N$ be $R$-modules. 
Suppose that $\rpd N < \infty$ and $q^R (M,N) =0$. 
\begin{enumerate}[\rm(1)]
\item The inequality $\grade (L, M) \leq \grade (L, M \otimes N ) + \depth R - \depth N$ holds. 
\item Suppose that $\Supp L \subseteq \Supp N$ then the inequality $\grade (L , M \otimes N) \leq \grade (L, N)$ holds. 
\end{enumerate}
\end{thm}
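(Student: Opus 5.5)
We reduce both inequalities to the depth formula of Theorem \ref{DF}, applied after localizing at primes in $\Supp L$. We use the remark that $\grade(L,X)=\inf\{\depth X_{\p}\mid \p\in\Supp L\}$.

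The first step is to check that all hypotheses localize. For $\p\in\Spec R$, the ring $R_\p$ is Cohen--Macaulay, and $\Tor_{>0}^{R_\p}(M_\p,N_\p)=0$. Localizing the defining exact sequences $0\to K_{i-1}^{\oplus a_i}\to K_i\to\syz^{n_i}K_{i-1}^{\oplus b_i}\to0$ gives sequences of the same shape over $R_\p$. Here $(\syz^{n}_R X)_\p$ is $\syz^{n}_{R_\p}X_\p$ up to a free summand. We absorb that summand into the middle term, either by forming a pullback/pushout, or by noting that a free summand does not affect finiteness of $\rpd$; this is routine but must be said. Also $\pd_{R_\p}(K_r)_\p<\infty$. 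Hence $\rpd_{R_\p}N_\p<\infty$. When $M_\p,N_\p\neq0$, Theorem \ref{DF} then gives
\[
\depth M_\p+\depth N_\p=\depth R_\p+\depth (M\otimes N)_\p .
\]
If $M_\p=0$ or $N_\p=0$, both sides involve $\infty$ and we handle these cases separately.

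For (1), fix $\p\in\Supp L$ realizing $\grade(L,M\otimes N)=\depth(M\otimes N)_\p$; if no prime does, that grade is $\infty$ and there is nothing to prove. If $(M\otimes N)_\p\neq0$, then $M_\p,N_\p\neq0$, and the formula gives
\[
\depth M_\p=\depth(M\otimes N)_\p+\depth R_\p-\depth N_\p .
\]
So we need $\depth R_\p-\depth N_\p\le\depth R-\depth N$, that is, $\dim R_\p-\depth N_\p\le \dim R-\depth N$, using that $R$ is Cohen--Macaulay. This is the main obstacle. The standard inequality $\depth N\le\depth N_\p+\dim R/\p$, together with $\dim R_\p+\dim R/\p=\dim R$ (valid since $R$ is Cohen--Macaulay local), gives
\[
\dim R_\p-\depth N_\p\le\dim R_\p+\dim R/\p-\depth N=\dim R-\depth N ,
\]
as needed. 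Then $\grade(L,M)\le\depth M_\p$ finishes (1).

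For (2), take $\p\in\Supp L$ with $\depth N_\p=\grade(L,N)$; since $\Supp L\subseteq\Supp N$, we have $N_\p\neq0$. If $M_\p=0$, we must instead choose differently. If $\Supp L\cap\Supp M=\emptyset$, then $\grade(L,M\otimes N)=\infty$, and this case needs care (it is trivial if $M$ has full support). Otherwise, the depth formula gives $\depth(M\otimes N)_\p=\depth N_\p-(\depth R_\p-\depth M_\p)\le\depth N_\p$, because $\depth M_\p\le\dim M_\p\le\dim R_\p=\depth R_\p$. Hence $\grade(L,M\otimes N)\le\depth N_\p=\grade(L,N)$. I expect the support bookkeeping in this last step, namely choosing $\p$ in $\Supp L\cap\Supp M$ while keeping control of $\grade(L,N)$, to need an extra argument or an implicit nonvanishing convention.
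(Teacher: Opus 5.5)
\textbf{Part (1).} Your argument is correct, and it takes a different route from the paper.

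\emph{The paper's route.} It works through the auxiliary module $K_r$ of finite projective dimension:
it proves $\depth N=\depth K_r$ (Theorem~\ref{B3}) and $\depth(M\otimes N)_{\p}=\depth(M\otimes K_r)_{\p}$ (Proposition~\ref{B4});
it deduces $\depth M_{\p}=\depth(M\otimes N)_{\p}+\pd_{R_{\p}}(K_r)_{\p}$;
it then bounds $\pd_{R_{\p}}(K_r)_{\p}\le\pd_RK_r=\depth R-\depth N$ by Auslander--Buchsbaum.

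\emph{Your route.} You apply Theorem~\ref{DF} directly to $M_{\p},N_{\p}$. You then control the defect $\depth R_{\p}-\depth N_{\p}$ by the general inequality $\depth N\le\depth N_{\p}+\dim R/\p$ together with $\operatorname{ht}\p+\dim R/\p=\dim R$.

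This bypasses Theorem~\ref{B3} and Proposition~\ref{B4} entirely. Finiteness of $\rpd$ enters only through the localized depth formula, which the paper also needs (via \cite[Lemma 2.5]{CKLM24}).

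\textbf{Part (2).} The gap you flag is genuine, and no extra argument can close it: the inequality $\grade(L,M\otimes N)\le\grade(L,N)$ fails under the stated hypotheses.

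Let $R=k[\![x,y,z,w]\!]/(xw-yz)$, $N=R$, $M=R/(x,y)\cong k[\![z,w]\!]$ and $L=R/(z,w)$. The hypotheses hold: $\rpd N=0$, $q^R(M,N)=0$ and $\Supp L\subseteq\Supp N$.
Since $(x,y)+(z,w)=\mathfrak{m}$, we have $\Supp L\cap\Supp M=\{\mathfrak{m}\}$. Hence $\grade(L,M\otimes N)=\grade(L,M)=\depth M=2$.
On the other hand, $\mathfrak{q}=(z,w)R$ is a prime of height one, because $R/\mathfrak{q}\cong k[\![x,y]\!]$ and $R$ is Cohen--Macaulay of dimension $3$. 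So $\grade(L,N)\le\depth R_{\mathfrak{q}}=1$.

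This is exactly your bad case: $\grade(L,N)$ is attained at $\mathfrak{q}\notin\Supp M$. (The case $\Supp L\cap\Supp M=\emptyset$ that you mention cannot occur for nonzero $L,M$ over a local ring.)

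The paper's proof has the same hole. It asserts $\grade(L,M\otimes K_r)\le\grade(L,K_r)$ ``by Theorem~\ref{GI} for $K_r$''. But Theorem~\ref{GI}, applied with $K_r$ as the module of finite projective dimension, only yields $\grade(L,M\otimes K_r)\le\grade(L,M)$. In the example above ($K_r=R$) the asserted step reads $2\le 1$.

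What your localization argument does prove is the following two statements.
\begin{enumerate}
\item If $\Supp L\subseteq\Supp M$, then $\grade(L,M\otimes N)\le\grade(L,N)$. Here a prime realizing $\grade(L,N)$ automatically lies in $\Supp M\cap\Supp N$, so your depth-formula step applies.
\item If $\Supp L\subseteq\Supp N$, then $\grade(L,M\otimes N)\le\grade(L,M)$. Choose $\p$ realizing $\grade(L,M)$ and use $\depth N_{\p}\le\depth R_{\p}$.
\end{enumerate}
The second is the inequality that Remark~\ref{B7} actually tests, since it compares $\grade(L,M\otimes N)$ with $\grade(L,M)$. So it is presumably the intended statement.
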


To prove the theorem, we state some lemmas. 
 
\begin{rem}\label{B1}\cite[Chapter 3. Exercise 19.]{AT69}
Let $M$ and $N$ be $R$-modules, 
\begin{enumerate}[\rm(1)]
\item $\Supp M^{\oplus a} = \Supp M$. 
\item If $N$ is a submodule of $M$, $\Supp N \subseteq \Supp M$. 
\item $\Supp M \otimes N = \Supp M \cap \Supp N$. 
\end{enumerate}
\end{rem}

In the rest of this secation, $K_r$ denotes the module which appears in the definition of $\rpd N=r<\infty$. 
Note that $\pd K_r <\infty$.

\begin{lem}\label{B2}
If $\rpd N < \infty$, then $\Supp N \subseteq \Supp K_r$. 
\end{lem}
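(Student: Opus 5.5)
We argue by induction along the chain of short exact sequences
$$
\{0\to K_{i-1}^{\oplus a_i}\to K_i\to\syz^{n_i}K_{i-1}^{\oplus b_i}\to0\}_{i=1}^r
$$
that witnesses $\rpd N=r<\infty$, with $K_0=N$. The claim to establish is
$$
\Supp K_{i-1}\subseteq\Supp K_i \quad\text{for each } 1\le i\le r.
$$
Chaining these inclusions gives $\Supp N=\Supp K_0\subseteq\Supp K_r$. If $r=0$ there is nothing to prove, since then $K_r=N$.

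Fix $i$. The $i$-th sequence gives an injection $K_{i-1}^{\oplus a_i}\hookrightarrow K_i$. By Remark \ref{B1}(2), $\Supp K_{i-1}^{\oplus a_i}\subseteq\Supp K_i$. By Remark \ref{B1}(1), $\Supp K_{i-1}^{\oplus a_i}=\Supp K_{i-1}$, because $a_i>0$. Hence $\Supp K_{i-1}\subseteq\Supp K_i$.

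Alternatively, localize at a prime $\p\in\Supp K_{i-1}$. Then $(K_{i-1})_\p^{\oplus a_i}\neq0$ embeds into $(K_i)_\p$, so $(K_i)_\p\ne0$.

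Only the left-hand term of each sequence is used, together with $a_i>0$. The syzygy term $\syz^{n_i}K_{i-1}^{\oplus b_i}$ plays no role, so there is no real obstacle here. The one point to keep in view is that the definition requires $a_i>0$ rather than $a_i\ge0$; this is exactly what makes the embedded module nonzero at each prime of $\Supp K_{i-1}$.
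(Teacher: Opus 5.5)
Your proposal is correct and follows the paper's own argument: the paper likewise applies Remark \ref{B1}(1)(2) to the injection $K_{i-1}^{\oplus a_i}\hookrightarrow K_i$ to get $\Supp K_{i-1}\subseteq\Supp K_i$ for each $i$, and then chains these inclusions to obtain $\Supp N\subseteq\Supp K_r$.
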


\begin{proof}
Since $\rpd N < \infty$, by the definition, we have 
$$
0 \to K_{i-1} ^{\oplus a_i} \to K_{i} \to \Omega ^{n_i} K_{i-1}^{\oplus b_i} \to 0
$$
for $1\le i\le r$, where $K_0 = N$.  
By Remark \ref{B1}(1)(2), $\Supp K_{i-1} \subseteq \Supp K_{i}$, so that $\Supp N \subseteq \Supp K_r$. 
\end{proof}

\begin{thm}\label{B3}
Let $R$ ba a local ring. 
Suppose that  $\depth N \leq \depth R$ for an $R$-module $N$. 
If $\rpd N < \infty$ then $\depth N = \depth {K_r}$. 
Consequently, $\grade (L, K_r) \leq \grade (L, N)$ for an $R$-module $L$. 
\end{thm}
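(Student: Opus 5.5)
The plan is to carry the depth of $N$ along the chain of short exact sequences witnessing $\rpd N=r$. Each step uses the depth lemma together with the standard behaviour of depth under syzygies.

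\emph{Setting up the induction.} Write the sequences as $0\to K_{i-1}^{\oplus a_i}\to K_i\to \syz^{n_i}K_{i-1}^{\oplus b_i}\to 0$ with $K_0=N$. The hypothesis $\depth N\le\depth R<\infty$ forces $N\neq0$, so by Lemma \ref{B2} every $K_i$ is nonzero.

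I will prove by induction on $i$ that $\depth K_i=\depth N$. Note that $\depth N\le \depth R$, so the inductive hypothesis $d:=\depth K_{i-1}\le\depth R$ is preserved once the equality is established.

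\emph{The inductive step.} Put $C=\syz^{n_i}K_{i-1}^{\oplus b_i}$. I first show $\depth C>d-1$, in fact $\depth C\ge d$. For a module $X$ with $\depth X\le\depth R$, we have:
\begin{itemize}
\item $\depth\syz X=\depth X+1$ if $\depth X<\depth R$;
\item $\depth \syz X\ge\depth R$ if $\depth X=\depth R$, with the convention $\depth 0=\infty$.
\end{itemize}
Iterating gives $\depth C\ge d$.

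The depth lemma applied to the sequence then yields two inequalities:
\begin{itemize}
\item $\depth K_i\ge\min\{d,\depth C\}=d$;
\item $d=\depth K_{i-1}^{\oplus a_i}\ge\min\{\depth K_i,\depth C+1\}$.
\end{itemize}
Since $\depth C+1>d$, the minimum in the second inequality must be $\depth K_i$, so $\depth K_i\le d$. Hence $\depth K_i=d$. Taking $i=r$ gives $\depth N=\depth K_r$.

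\emph{The grade consequence.} This is the step needing care, because the hypothesis $\depth N\le\depth R$ need not survive localization. I use $\grade(L,X)=\inf\{\depth X_\p\mid \p\in\Supp L\}$. The sequences localize, and $\pd (K_r)_\p<\infty$, so it suffices to show $\depth (K_r)_\p\le\depth N_\p$ for every $\p\in\Supp L\cap\Supp N$; primes outside $\Supp N$ contribute $\infty$ to $\grade(L,N)$. Fix such a $\p$ and split into two cases.
\begin{itemize}
\item If $\depth N_\p\le\depth R_\p$, the first part applied over $R_\p$ gives $\depth (K_r)_\p=\depth N_\p$.
\item If $\depth N_\p>\depth R_\p$, note that $(K_r)_\p\neq0$ by Lemma \ref{B2} and has finite projective dimension. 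The Auslander--Buchsbaum formula gives $\depth(K_r)_\p=\depth R_\p-\pd(K_r)_\p\le\depth R_\p<\depth N_\p$.
\end{itemize}
Taking infima over $\p\in\Supp L$ yields $\grade(L,K_r)\le\grade(L,N)$.
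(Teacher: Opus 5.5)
Your proof is correct and takes the same route as the paper. The depth lemma along the chain, using $\depth \syz^{n}X\ge\depth X$ when $\depth X\le\depth R$, gives $\depth K_i=\depth K_{i-1}$, and the grade statement then follows by localizing and using $\grade(L,X)=\inf\{\depth X_\p\mid \p\in\Supp L\}$.

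Two points where you are more careful than the paper. Your argument that $\depth K_i\le\depth K_{i-1}$ (the minimum must be $\depth K_i$ because $\depth C+1>d$) is cleaner than the paper's one-line inequality. Your Auslander--Buchsbaum case for primes with $\depth N_\p>\depth R_\p$ fills a point the paper passes over when it asserts $\depth N_\p=\depth(K_r)_\p$ for all $\p\in\Supp N$. That equality is automatic only when, for instance, $R$ is Cohen--Macaulay, as in the paper's later applications.
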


\begin{proof}
First, we remark that, by the assumption and using the depth lemma, we have $\depth \Omega ^n M \geq \min \{ \depth R, \depth M + n\}$ for every $R$-module $M$. 
Particularly, $\depth \Omega ^n M \geq \depth M$. 
Since $\rpd N = r < \infty$, there are short exact sequences
$$
0 \to K_{i-1} ^{\oplus a_i} \to K_{i} \to \Omega ^{n_i} K_{i-1}^{\oplus b_i} \to 0
$$
for $i = 1, ..., r$. 
By using the depth lemma, $\depth K_i \geq \min \{ \depth K_{i-1}, \depth  \Omega ^{n_i} K_{i-1} \}$
Since $\depth \Omega ^{n_i}K_{i-1} \geq \depth K_{i-1}$, we have $\depth K_i \geq \depth K_{i-1}$. 
Similarly we have $\depth K_{i-1} \geq \min \{ \depth K_{i}, \depth  \Omega ^{n_i} K_{i} +1 \} \geq \depth K_{i}$. 
Hence $\depth K_{i-1} = \depth K_{i}$ for all $i$. 
So, $\depth N = \depth K_r$ holds. 

Let $L$ be an $R$-module and $\p$ in $\Supp L$. 
According to the observation above, $\depth N_{\p} = \depth {K_r}_{\p}$ holds if $\p \in \Supp N$. 
By Lemma \ref{B2}, we obtain the equality $\grade (L, N) = \grade (L, K_r)$. 
Since we may have the case that $\p \in \Supp K_r \backslash \Supp N$, the inequality may happen. 
\end{proof}

\begin{prop}\label{B4}
Let $R$ be a Cohen--Macaulay local ring and let $M$ and $N$ be $R$-modules with $q^R (M,N) =0$. 
Suppose that $\rpd N < \infty$. 
Then $\depth _{R_{\p}} (M \otimes N)_{\p} = \depth_{R_{\p}} (M \otimes K_r)_{\p}$ for each $\p \in \Supp M \otimes N$. 
\end{prop}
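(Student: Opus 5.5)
The plan is to localize at $\p$, use the depth formula twice (once for $M,N$ and once for $M,K_r$), and thereby reduce the claim to the depth equality $\depth N_\p=\depth (K_r)_\p$, which is the content of Theorem \ref{B3} carried out over $R_\p$.

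\textbf{Step 1: Tor vanishing along the chain.} I will show that $\Tor_{>0}^R(M,K_i)=0$ for all $0\le i\le r$, by induction on $i$. The case $i=0$ is the hypothesis $q^R(M,N)=0$. For the inductive step, dimension shifting gives $\Tor_j^R(M,\syz^{n}X)\cong\Tor_{j+n}^R(M,X)$ for $j>0$, so $\Tor_{>0}^R(M,\syz^{n_i}K_{i-1}^{\oplus b_i})=0$. The long exact Tor sequence of $0\to K_{i-1}^{\oplus a_i}\to K_i\to\syz^{n_i}K_{i-1}^{\oplus b_i}\to0$ then yields $\Tor_{>0}^R(M,K_i)=0$. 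In particular $q^R(M,K_r)\le 0$, and the same vanishing holds after localizing at $\p$.

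\textbf{Step 2: the two depth formulas at $\p$.} Fix $\p\in\Supp M\otimes N=\Supp M\cap\Supp N$. By Lemma \ref{B2}, $\p\in\Supp K_r$ as well, so all the modules $M_\p,N_\p,(K_r)_\p,(M\otimes N)_\p,(M\otimes K_r)_\p$ are nonzero and their depths are finite. Since $\pd (K_r)_\p<\infty$ and $\Tor^{R_\p}_{>0}(M_\p,(K_r)_\p)=0$, Auslander's depth formula (equivalently Theorem \ref{DF} with reducing invariant $0$) gives
\[
\depth M_\p+\depth (K_r)_\p=\depth R_\p+\depth (M\otimes K_r)_\p .
\]
For the pair $M,N$ I apply Theorem \ref{DF} over the Cohen--Macaulay ring $R_\p$, which gives
\[
\depth M_\p+\depth N_\p=\depth R_\p+\depth (M\otimes N)_\p .
\]
This requires $\rpd_{R_\p}N_\p<\infty$, which I justify in Step 3. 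Subtracting the two displays reduces the proposition to $\depth N_\p=\depth (K_r)_\p$.

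\textbf{Step 3: localizing the reducing sequences (the main obstacle).} Localizing the defining sequences gives exact sequences $0\to (K_{i-1})_\p^{\oplus a_i}\to (K_i)_\p\to \syz^{n_i}_{R_\p}(K_{i-1})_\p^{\oplus b_i}\oplus F_i\to 0$ with $F_i$ free. The difficulty is that these free summands $F_i$ spoil the exact shape required in Definition \ref{maindef}. I expect this to be the main obstacle, both for applying Theorem \ref{DF} and for applying Theorem \ref{B3} verbatim.

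For the depth equality I will avoid the issue by rerunning the proof of Theorem \ref{B3} directly on the localized sequences. That argument only uses that the cokernel $C_i$ satisfies $\depth C_i\ge \depth (K_{i-1})_\p$. This still holds here, because $\depth F_i=\depth R_\p\ge\depth X$ for every $R_\p$-module $X$ and $\depth\syz^n X\ge\depth X$, both using $\depth X\le\dim X\le\dim R_\p=\depth R_\p$, which holds since $R_\p$ is Cohen--Macaulay. The two applications of the depth lemma then give $\depth (K_{i-1})_\p=\depth (K_i)_\p$ for all $i$, and hence $\depth N_\p=\depth (K_r)_\p$.

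For the depth formula for $M,N$ at $\p$, one option is to observe that the proof of Theorem \ref{DF} tolerates free summands in the cokernels in the same way. The other option is to replace the use of Theorem \ref{DF} entirely by the same depth-lemma induction. In that version one proves $\depth (M\otimes K_{i-1})_\p=\depth (M\otimes K_i)_\p$ by tensoring the localized sequences with $M_\p$; these stay exact by Step 1, and one uses $(M\otimes \syz^n X)_\p$ together with the Tor vanishing. I would try the first option first, since it only requires checking that the cited argument goes through.
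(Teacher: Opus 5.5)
Your argument is the paper's: localize at $\p$, apply the depth formula to $(M_\p,N_\p)$ and to $(M_\p,(K_r)_\p)$, and cancel using $\depth N_\p=\depth (K_r)_\p$ from Theorem \ref{B3}; your Step 1 supplies the Tor vanishing for $(M,K_r)$ that the paper merely asserts. The localization issue you single out is settled in the paper by citing \cite[Lemma 2.5]{CKLM24} for $\rpd_{R_\p}N_\p<\infty$ (this is immediate if $\syz^n$ may be any $n$th syzygy, since $(\syz^n_R X)_\p$ is then an $n$th syzygy of $X_\p$), so your first option is the right one, whereas the second, as sketched, would need $\depth (M\otimes K_{i-1})_\p\le\depth M_\p$, which is essentially what the depth formula for $(M_\p,(K_{i-1})_\p)$ provides.
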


\begin{proof}
We put $K = K_r$. 
Let $\p \in \Supp M \otimes N$. 
By Lemma \ref{B2} and Remark \ref{B1}(3), we have $\p \in \Supp M \otimes K$. 
Bear in mind that $\rpd_{R_{\p}} N_{\p} < \infty$ (see \cite[Lemma 2.5]{CKLM24}) and $q^{R_{\p}} (M_{\p}, N_{\p}) = 0$. 
Moreover, $\pd_{R_{\p}} K_{\p } < \infty$ and $q^{R_{\p}} (M_{\p}, K_{\p}) = 0$. 
Thus we can apply the depth formula (Theorem \ref{DF}) for $M_{\p}$ and $N_{\p}$ and for $M_{\p}$ and $K_{\p}$ respectively. 
That is,
\begin{align*}
&\depth_{R_{\p}} M_{\p} + \depth_{R_{\p}} N_{\p }= \depth_{R_{\p}} M_{\p} \otimes N_{\p} + \depth_{R_{\p}} R_{\p},\\
&\depth_{R_{\p}} M_{\p} + \depth_{R_{\p}} K_{\p }= \depth_{R_{\p}} M_{\p} \otimes K_{\p} + \depth_{R_{\p}} R_{\p}.
\end{align*}
As $\depth_{R_{\p}} N_{\p} = \depth_{R_{\p}} K_{\p}$ by Theorem \ref{B3}, we get the equality $\depth_{R_{\p}} M_{\p} \otimes N_{\p} = \depth_{R_{\p}} M_{\p} \otimes K_{\p}$. 
\end{proof}

\begin{proof}[Proof of Theorem \ref{B5}]
For simplicity, we put $K = K_r$. 
Let $\p \in \Supp L$. 
Suppose that $(M \otimes N)_{\p}=0$, then $\depth _{R_{\p}}(M \otimes N)_{\p} = \infty$. 
So the inequality $\depth M_{\p} \leq \depth (M \otimes N)_{\p}$ holds. 
We assume that $(M \otimes N)_{\p}\not=0$. 
Since $\Supp M \otimes N \subseteq \Supp M \otimes K$, we have $(M \otimes K)_{\p}\not=0$. 
Particularly, $K_{\p} \not=0$. 
Notice that $\pd K$ is finite and $\pd_{R_{\p}} K_{\p}$ is so. 
By the Auslander-Buchsbaum formula, $\pd K_{\p} = \depth R_{\p} - \depth K_{\p}$.  
Apply the depth formula for $K_{\p}$, one has
$$
\begin{array}{ll}
\depth M_{\p} &= \depth (M \otimes K)_{\p} + \depth R_{\p} - \depth K_{\p} \\
&= \depth (M \otimes K)_{\p} + \pd_{R_{\p}} K_{\p}. \\
\end{array}
$$
By virtue of Lemma \ref{B3} and Proposition \ref{B4}, we obtain $\depth M_{\p} = \depth (M \otimes N)_{\p} + \pd_{R_{\p}} K_{\p}$ for each $\p \in \Supp L \cap \Supp M \otimes N$. 
Use the Auslander-Buchsbaum formula again, $\pd_{R_{\p}}K_{\p} \leq \pd K = \depth R - \depth K = \depth R - \depth N.$
Thus we have 
$$
\grade (L, M) \leq \grade (L, M\otimes N) +\pd K = \grade (L, M\otimes N) + \depth R - \depth N. 
$$

Now we show (2). 
First, we claim that $\Supp M \otimes N \cap \Supp L = \Supp M \otimes K \cap \Supp L$. 
By the assumption and Lemma \ref{B2}, $\Supp L \subseteq \Supp N \subseteq \Supp K$. 
Thus, 
$$
\begin{array}{rl}
\Supp M \otimes N \cap \Supp L &= \Supp M \cap  \Supp N \cap \Supp L \\
&= \Supp M \cap \Supp L  \\
&= \Supp M \cap \Supp K \cap \Supp L . \\
&= \Supp M \otimes K \cap \Supp L . \\
\end{array}
$$
Combining the equality and Proposition \ref{B4}, one can show $\grade (L, M \otimes N) = \grade (L, M \otimes K)$.  
Since $\pd K$ is finite, we have $\grade (L, M \otimes N) = \grade (L, M \otimes K) \leq \grade (L, K) \leq \grade (L, N)$ by grade inequality (Theorem \ref{GI}) for $K$ and Theorem \ref{B3}. 
\end{proof}

\begin{rem}\label{B7}
The assumption $\Supp L \subseteq \Supp N$ of Theorem \ref{B5} (2) cannot be omitted. 
Let $R=k[\![x, y]\!]/(xy)$, $L=R/(x)$, $M=R$ and $N=R/(y)$, where $k$ is a field. 
Then $\Supp L \not\subseteq \Supp N$. 
We have a short exact sequence $0 \to N \to R \to L \to 0$, so that $\rpd N = 1$. 
Since $\Hom_R (L,N)=0$ and $\Ext^1_R (L,N) \not=0$, we have $\grade (L, N) = 1$. 
We can also show that $\Ext^i_R (L, M) =0$ for $i > 0$ since $R$ is Gorenstein and $L$ is a maximal Cohen--Macaulay $R$-module. 
Hence $\grade (L, M) = 0$. 
Summing up with $\grade (L, N) = 1$ and $\grade (L, M) = 0$, we see that 
$$
\grade (L, M \otimes N)=\grade (L, R \otimes N) = 1 \not \leq \grade (L, M) = 0. 
$$
This example also tells us that the grade inequality (Theorem \ref{GI}) does not hold even if an $R$-module $N$ has a finite complete intersection dimension. 
As $R$ is complete intersection, $\CIdim N < \infty$ (\cite[Theorem 1.3]{AGP97}).  
\end{rem}

\section{Reducing closures}\label{C}

In this section, we introduce subcategories of $\mod R$ in terms of reducing invariants. 
These subcategories induce a closure operator on $\mod R/\cong$. 
We also give the condition that these subcategories are closed. 

\begin{dfn}\label{maindef2}
For a subcategory $\X$ of $\mod R$ we define the subcategories $\red\X$ and $\sred\X$ of $\mod R$ by:
\begin{align*}
\red\X&=\left\{\begin{array}{r|l}
\!\!\!M\in\mod R&
\begin{matrix}
\text{there exists a family of exact sequences in $\mod R$}\\
\{0\to K_{i-1}^{\oplus a_i}\to K_i\to\syz^{n_i}K_{i-1}^{\oplus b_i}\to0\}_{i=1}^r\\
\text{with $r,n_i\ge0$, $a_i,b_i>0$, $K_0=M$ and $K_r\in\X$}\end{matrix}
\end{array}\!\right\}\!,\\
\sred\X&=\left\{\begin{array}{r|l}
\!\!\!M\in\mod R&
\begin{matrix}
\text{there exists a family of exact sequences in $\mod R$}\\
\{0\to K_{i-1}\to K_i\to\syz^{n_i}K_{i-1}\to0\}_{i=1}^r\\
\text{with $r,n_i\ge0$, $K_0=M$ and $K_r\in\X$}\end{matrix}
\end{array}\!\right\}\!,
\end{align*}
We call $\red\X$ and $\sred\X$ the {\em reducing closure} and the {\em ${}^\ast$reducing closure} of $\X$, respectively.
Note by definition that there is always an inclusion $\red\X\supseteq\sred\X$.
\end{dfn}

\begin{rem}
The subcategory $\sred\X$ comes from another modification of reducing invariants by Araya and Takahashi \cite[Definition 3.1]{red}. 
See also Definition \ref{D1}. 
\end{rem}

\begin{dfn}
Let $X$ be a set.
A {\em Kuratowski closure operator} on $X$ is defined to be a map $f:2^X\to2^X$ which satisfies the following four properties.
\begin{enumerate}
\item[\rm(K1)]
One has $f(\emptyset)=\emptyset$.
\item[\rm(K2)]
For a subset $A$ of $X$ one has $A\subseteq f(A)$.
\item[\rm(K3)]
For a subset $A$ of $X$ one has $f(A)=f(f(A))$.
\item[\rm(K4)]
For two subsets $A,B$ of $X$ one has $f(A\cup B)=f(A)\cup f(B)$.
\end{enumerate}
\end{dfn}

\begin{thm}\label{3}
The assignments $\red(-)$ and $\sred(-)$ define Kuratowski closure operators on $\mod R/\cong$.
\end{thm}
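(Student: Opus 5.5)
We verify (K1)--(K4) for $f=\red$, treating subcategories as subsets of $\mod R/\cong$; the same argument works for $f=\sred$ by taking all $a_i=b_i=1$. For (K1): if $\X=\emptyset$, no family of sequences can end with $K_r\in\X$, so $\red\emptyset=\emptyset$. For (K2): take $r=0$; then $K_0=M=K_r$, so every $M\in\X$ lies in $\red\X$. Monotonicity is also immediate: $\X\subseteq\Y$ implies $\red\X\subseteq\red\Y$, because a chain ending in $\X$ also ends in $\Y$.

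For (K3), (K2) and monotonicity give $\red\X\subseteq\red(\red\X)$. For the converse we concatenate chains. Let $M\in\red(\red\X)$. There is a chain $\{0\to K_{i-1}^{\oplus a_i}\to K_i\to\syz^{n_i}K_{i-1}^{\oplus b_i}\to0\}_{i=1}^r$ with $K_0=M$ and $K_r\in\red\X$. Since $K_r\in\red\X$, there is a second chain $\{0\to L_{j-1}^{\oplus c_j}\to L_j\to\syz^{m_j}L_{j-1}^{\oplus d_j}\to0\}_{j=1}^s$ with $L_0=K_r$ and $L_s\in\X$. Set $K_{r+j}=L_j$. The combined family of $r+s$ sequences starts at $M$ and ends in $\X$, so $M\in\red\X$. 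The only thing to check is that the indexing conventions match: the exponents $a_i,b_i$ and the syzygy orders are local to each step, so nothing changes when the chains are joined.

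For (K4), monotonicity gives $\red\X\cup\red\Y\subseteq\red(\X\cup\Y)$. Conversely, if $M\in\red(\X\cup\Y)$, the witnessing chain ends at $K_r\in\X\cup\Y$. If $K_r\in\X$ the same chain shows $M\in\red\X$, and otherwise $M\in\red\Y$. The only point needing care is that subcategories are strictly full, so membership is invariant under isomorphism and the identification with subsets of $\mod R/\cong$ is legitimate. In particular, $\syz^{n}$ is only defined up to projective summands, but up to isomorphism this is fixed by the fixed convention for syzygies, so the chain conditions are isomorphism-invariant. I expect no real obstacle; (K3), with its concatenation bookkeeping, is the only step needing an actual construction.
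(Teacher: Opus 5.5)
Your proof is correct and follows the paper's argument essentially step for step: (K1) is trivial, (K2) uses $r=0$, (K3) concatenates the two chains, (K4) checks which of $\X$, $\Y$ contains $K_r$, and the $\sred$ case is handled in the same way. Your appeal to monotonicity in (K3) is unnecessary, since (K2) applied directly to $\red\X$ already gives $\red\X\subseteq\red(\red\X)$, but it does no harm.
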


\begin{proof}
We only prove the assertion on $\red(-)$; that on $\sred(-)$ can be shown in a similar way.

(K1) It is evident that $\red\emptyset=\emptyset$.

(K2) Let $\X$ be a subcategory of $\mod R$.
Letting $r=0$ in the definition of $\red\X$ shows that $\X\subseteq\red\X$.

(K3) Let $\X$ be a subcategory of $\mod R$.
By (K2), we get $\red\X\subseteq\red(\red\X)$.
Let $M\in\red(\red\X)$.
There is a family $\{0\to K_{i-1}^{\oplus a_i}\to K_i\to\syz^{n_i}K_{i-1}^{\oplus b_i}\to0\}_{i=1}^r$ of exact sequences with $r,n_i\ge0$, $a_i,b_i>0$, $K_0=M$ and $K_r\in\red\X$.
Hence there is a family $\{0\to K_{i-1}^{\oplus a_i}\to K_i\to\syz^{n_i}K_{i-1}^{\oplus b_i}\to0\}_{i=r+1}^s$ of exact sequences with $s\ge r$, $n_i\ge0$, $a_i,b_i>0$ and $K_s\in\X$.
The family $\{0\to K_{i-1}^{\oplus a_i}\to K_i\to\syz^{n_i}K_{i-1}^{\oplus b_i}\to0\}_{i=1}^s$ shows that $M\in\red\X$.

(K4) Let $\X,\Y$ be subcategories of $\mod R$.
Take $M\in\red(\X\cup\Y)$.
There is a family $\{0\to K_{i-1}^{\oplus a_i}\to K_i\to\syz^{n_i}K_{i-1}^{\oplus b_i}\to0\}_{i=1}^r$ of exact sequences with $r,n_i\ge0$, $a_i,b_i>0$, $K_0=M$ and $K_r\in\X\cup\Y$.
Hence $K_r$ is in either $\X$ or $\Y$.
If $K_r$ is in $\X$ (resp. $\Y$), then $M$ is in $\red\X$ (resp. $\red\Y$).
Thus $\red(\X\cup\Y)\subseteq\red\X\cup\red\Y$.

Pick any $M\in\red\X\cup\red\Y$.
Then $M$ is in either $\red\X$ or $\red\Y$.
In the former (resp. latter) case, there is a family $\{0\to K_{i-1}^{\oplus a_i}\to K_i\to\syz^{n_i}K_{i-1}^{\oplus b_i}\to0\}_{i=1}^r$ of exact sequences such that $r,n_i\ge0$, $a_i,b_i>0$, $K_0=M$ and $K_r$ is in $\X$ (resp. $\Y$).
In either case $K_r$ is in $\X\cup\Y$, and thus $\red(\X\cup\Y)$ contains $\red\X\cup\red\Y$.
\end{proof}

\begin{ques}
Can we say anything about the topology defined by the Kuratowski closure operator $\red(-)$?
\end{ques}

\begin{dfn}
Let $\X$ be a subcategory of $\mod R$.
We say that $\X$ is {\em reducingly closed} (resp. {\em ${}^\ast$reducingly closed}) provided that $\X=\red\X$ (resp. $\X=\sred\X$).
According to Theorem \ref{3}, it always holds that $\X\subseteq\sred\X\subseteq\red\X$.
In particular, if $\X$ is reducingly closed, then it is ${}^\ast$reducingly closed as well.
\end{dfn}

The following lemma will be useful to show that a given subcategory is reducingly closed.

\begin{lem}\label{1}
Let $\X$ be a subcategory of $\mod R$.
The following two conditions are equivalent.
\begin{enumerate}[\rm(1)]
\item
The subcategory $\X$ is reducingly closed.
\item
For every short exact sequence of the form $0\to M^{\oplus a}\to K\to\syz^nM^{\oplus b}\to0$ with $n\ge0$ and $a,b>0$, one has that if $K$ belongs to $\X$, then so does $M$.
\end{enumerate}
\end{lem}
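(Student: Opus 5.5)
Both implications come down to the definition of $\red\X$ and an induction on the length $r$ of the defining family of exact sequences.

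\emph{(1) implies (2).} Assume $\X=\red\X$, and let $0\to M^{\oplus a}\to K\to\syz^nM^{\oplus b}\to0$ be exact with $n\ge0$, $a,b>0$ and $K\in\X$. Put $K_0=M$ and $K_1=K$. This one-term family ($r=1$, $n_1=n$, $a_1=a$, $b_1=b$) has $K_r\in\X$, so it witnesses $M\in\red\X$. Hence $M\in\X$.

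\emph{(2) implies (1).} By (K2) of Theorem \ref{3} we already have $\X\subseteq\red\X$, so only the reverse inclusion is needed. Let $M\in\red\X$ and choose a family
\[
\{0\to K_{i-1}^{\oplus a_i}\to K_i\to\syz^{n_i}K_{i-1}^{\oplus b_i}\to0\}_{i=1}^r
\]
with $K_0=M$ and $K_r\in\X$. We show by descending induction on $i$ that $K_i\in\X$ for $i=r,r-1,\dots,0$.

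The base case $i=r$ holds by assumption. For the inductive step, suppose $K_i\in\X$ with $i\ge1$. The $i$-th sequence has exactly the shape required in (2), with $M$ replaced by $K_{i-1}$, $K$ by $K_i$, $n=n_i$, $a=a_i$ and $b=b_i$. Applying (2) gives $K_{i-1}\in\X$. When the induction reaches $i=0$ we obtain $M=K_0\in\X$. If $r=0$ there is nothing to prove, since then $M=K_0=K_r\in\X$.

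There is no genuine obstacle. The only points needing care are that the hypothesis in (2) is applied to each intermediate module $K_{i-1}$ rather than only to $M$, and that the degenerate case $r=0$ is covered.
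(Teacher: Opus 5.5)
Your proposal is correct and follows the paper's proof: (1)$\Rightarrow$(2) uses the one-step family to witness $M\in\red\X=\X$, and (2)$\Rightarrow$(1) applies (2) inductively along the family. Your explicit use of (K2) for $\X\subseteq\red\X$ and your separate handling of the case $r=0$ just spell out what the paper leaves implicit.
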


\begin{proof}
(1)$\Rightarrow$(2):
Let $0\to M^{\oplus a}\to K\to\syz^nM^{\oplus b}\to0$ be an exact sequence with $n\ge0$, $a,b>0$ and $K\in\X$.
Then $M$ belongs to $\red\X$ by definition.
Since it is assumed that $\red\X=\X$, the module $M$ belongs to $\X$.

(2)$\Rightarrow$(1):
Let $M\in\red\X$.
There exists a family $\{0\to K_{i-1}^{\oplus a_i}\to K_i\to\syz^{n_i}K_{i-1}^{\oplus b_i}\to0\}_{i=1}^r$ of exact sequences with $r,n_i\ge0$, $a_i,b_i>0$, $K_0=M$ and $K_r\in\X$.
By assumption and induction on $r$, we get $M\in\X$.
\end{proof}

\begin{dfn}
Let $\X$ be a subcategory of $\mod R$.
\begin{enumerate}[(1)]
\item
We say that $\X$ is {\em closed under subobjects} provided that for an exact sequence $0\to M\to X$ in $\mod R$, if $X$ belongs to $\X$, then $M$ also belongs to $\X$.
\item
We say that $\X$ is {\em closed under kernels} provided that for an exact sequence $0\to M\to X\to Y$ in $\mod R$, if both $X$ and $Y$ belong to $\X$, the module $M$ also belongs to $\X$.
\end{enumerate}
\end{dfn}

We state two sufficient conditions for a given subcategory of $\mod R$ to be reducingly closed.

\begin{prop}
A subcategory $\X$ of $\mod R$ is reducingly closed if either of the following conditions holds.
\begin{enumerate}[\rm(1)]
\item
The subcategory $\X$ is closed under subobjects.
\item
The subcategory $\X$ contains the projective $R$-modules and is closed under finite direct sums and kernels.
\end{enumerate}
\end{prop}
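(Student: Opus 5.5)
By Lemma \ref{1}, it is enough in both cases to verify condition (2) of that lemma. So fix an exact sequence $0\to M^{\oplus a}\to K\to\syz^nM^{\oplus b}\to0$ with $n\ge0$, $a,b>0$ and $K\in\X$. We must show $M\in\X$.

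\emph{Case} (1). Since $a>0$, $M$ is a direct summand of $M^{\oplus a}$, so there is a monomorphism $M\hookrightarrow M^{\oplus a}$. Composing it with $M^{\oplus a}\hookrightarrow K$ gives an exact sequence $0\to M\to K$ with $K\in\X$. Closure under subobjects then gives $M\in\X$ directly.

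\emph{Case} (2). Here the idea is to exhibit $M$ as a kernel of a map between modules of $\X$. First handle $n\ge1$. The syzygy $\syz^nM$ embeds in a free module $F$, coming from a projective resolution of $M$; concretely, $\syz^nM\subseteq F_{n-1}$, and $F_{n-1}^{\oplus b}$ is projective, hence lies in $\X$. Composing $K\to\syz^nM^{\oplus b}\hookrightarrow F_{n-1}^{\oplus b}$ gives an exact sequence
\[
0\to M^{\oplus a}\to K\to F_{n-1}^{\oplus b},
\]
with $K,F_{n-1}^{\oplus b}\in\X$. Closure under kernels yields $M^{\oplus a}\in\X$.

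For $n=0$ the quotient is $M^{\oplus b}$ itself, with no obvious embedding into a projective. This is the step I expect to be the main obstacle. The cleanest way around it would be to read $\syz^0M$ as $M$ and reduce to $n\ge1$, but that identification is exactly the problem. Instead I would use the subobject-free argument below, which works for every $n$, including $n=0$.

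Once $M^{\oplus a}\in\X$, we still have to pass to the summand $M$. Write $M$ as the kernel of the idempotent endomorphism $e$ of $M^{\oplus a}$ that kills the first summand and is the identity on the others. Then $0\to M\to M^{\oplus a}\xrightarrow{e}M^{\oplus a}$ is exact with both right-hand terms in $\X$, so closure under kernels gives $M\in\X$.

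For $n=0$, apply the same trick directly: the exact sequence $0\to M^{\oplus a}\to K\to M^{\oplus b}$ alone does not help, since $M^{\oplus b}\notin\X$ a priori. I would therefore note that if the proposition is intended with $n=0$ included, one needs this extra case. I would try to show $M^{\oplus a}\in\X$ via a pullback along a projective cover of $M^{\oplus b}$ (taking $0\to\syz M^{\oplus b}\to F\to M^{\oplus b}\to0$). This gives a module $K'$ with $0\to K'\to K\oplus F$ and a sequence $0\to M^{\oplus a}\to K'\to F$. Here $K\oplus F\in\X$ by direct sums, and I would check whether $K'$ is a kernel of $K\oplus F\to M^{\oplus b}$, which again requires membership of $M^{\oplus b}$ in $\X$. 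If this fails, I would restrict case (2) to $n\ge1$, or otherwise handle $n=0$ separately.
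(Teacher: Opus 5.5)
\paragraph{Verdict.} There is a genuine gap: in case (2) you never handle $n=0$. Lemma~\ref{1} requires every $n\ge0$, so this case cannot be dropped.

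\paragraph{What works and what fails.} Your case (1) matches the paper. Your argument for case (2) with $n\ge1$ is correct. Embedding $\syz^nM^{\oplus b}$ into the free module $F_{n-1}^{\oplus b}$ is a legitimate route, different from the paper's, and it uses only that $\X$ contains projectives and is closed under kernels. For $n=0$, however, you are not entitled to say that the ``argument below'' works for every $n$. Your idempotent trick only passes from $M^{\oplus a}\in\X$ to $M\in\X$, so it presupposes exactly what is missing. Your pullback attempt, as you noticed, circles back to needing $M^{\oplus b}\in\X$. A telling symptom is that you never use the hypothesis that $\X$ is closed under finite direct sums.

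\paragraph{The missing idea.} Use the given sequence itself to embed its cokernel into a module of $\X$; this is where closure under direct sums enters. For $n=0$, take the direct sum of $b$ copies of the sequence:
\[
0\to M^{\oplus ab}\to K^{\oplus b}\to M^{\oplus b^2}\to0 .
\]
This shows that $M^{\oplus ab}$ is a submodule of $K^{\oplus b}$. Since $a>0$, its summand $M^{\oplus b}$ is one as well. The composite $K\to M^{\oplus b}\hookrightarrow K^{\oplus b}$ then gives an exact sequence $0\to M^{\oplus a}\to K\to K^{\oplus b}$. Here $K^{\oplus b}\in\X$ by closure under direct sums, so $M^{\oplus a}\in\X$, and your idempotent trick finishes the proof.

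\paragraph{How the paper does it.} The paper runs this argument uniformly in $n$. It splices
\[
0\to M^{\oplus a^2}\to K^{\oplus a}\to\syz^nM^{\oplus ab}\to0
\]
with the $n$th syzygy of $b$ copies of the sequence,
\[
0\to\syz^nM^{\oplus ab}\to\syz^nK^{\oplus b}\to\syz^{2n}M^{\oplus b^2}\to0,
\]
and obtains $0\to M^{\oplus a^2}\to K^{\oplus a}\to\syz^nK^{\oplus b}$. Here $\syz^nK\in\X$ because $\X$ contains projectives and is closed under kernels. The paper's route therefore needs direct sums for every $n$. Your route avoids them when $n\ge1$ but cannot avoid them when $n=0$.
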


\begin{proof}
Let $0\to M^{\oplus a}\to K\to\syz^nM^{\oplus b}\to0$ be an exact sequence in $\mod R$ with $n\ge0$, $a,b>0$ and $K\in\X$.
In view of Lemma \ref{1}, it suffices to show that $M$ belongs to $\X$.

(1) There are exact sequences $0\to M^{\oplus a}\to K$ and $0\to M\to M^{\oplus a}$.
As $K$ is in $\X$, so is $M^{\oplus a}$ and so is $M$.

(2) There are exact sequences $0\to M^{\oplus a^2}\to K^{\oplus a}\to\syz^nM^{\oplus ab}\to0$ and $0\to\syz^nM^{\oplus ab}\to\syz^nK^{\oplus b}\to\syz^{2n}M^{\oplus b^2}\to0$ with $n\ge0$.
Splicing these exact sequences, we get an exact sequence
\begin{equation}\label{14}
0\to M^{\oplus a^2}\to K^{\oplus a}\to\syz^nK^{\oplus b}.
\end{equation}
As $\X$ is closed under finite direct sums, $K^{\oplus a}$ belongs to $\X$.
There is a series of exact sequences
$$
\{0\to\syz^{i+1}K\to P_i\to\syz^iK\to0\}_{i=1}^{n-1}
$$
of $R$-modules with each $P_i$ projective.
As $\X$ contains all the $P_i$ and is closed under kernels, we inductively see that $\syz^nK$ is in $\X$, and so is $\syz^nK^{\oplus b}$ as $\X$ is closed under finite direct sums.
Since $\X$ is closed under kernels, it is observed from \eqref{14} that $M^{\oplus a^2}$ belongs to $\X$.
Splicing the split short exact sequences $0\to M\to M^{\oplus a^2}\to M^{\oplus a^2-1}\to0$ and $0\to M^{\oplus a^2-1}\to M^{\oplus a^2}\to M\to 0$, we get an exact sequence $0\to M\to M^{\oplus a^2}\to M^{\oplus a^2}$.
Using the assumption that $\X$ is closed under kernels again, we obtain the containment $M\in\X$.
\end{proof}

\section{Reducing projective and Gorenstein dimensions}\label{D}

In this section, we focus on reducing invariants with respect to projective and Gorenstein dimensions. 
First, we give another reducing invariant with respect to ``\,{\it ${}^\ast$reducing}''. 

\begin{dfn}\label{D1}
We define the {\em ${}^\ast$reducing $\I_R$-invariant} of $M$, denoted $\sri_R(M)$, by replacing ``$\red$'' with ``\,$\sred$'' in the definition of the reducing $\I_R$-invariant of $M$. 
That is, we consider a family $\{0\to K_{i-1}\to K_i\to\syz^{n_i}K_{i-1}\to0\}_{i=1}^r$ instead of $\{0\to K_{i-1}^{\oplus a_i}\to K_i\to\syz^{n_i}K_{i-1}^{\oplus b_i}\to0\}_{i=1}^r$. 
\end{dfn}

Note by definition that $\sri_R(M)\le\ri_R(M)$ holds.

\begin{dfn}
Denote by $\fpd R$ the subcategory of $\mod R$ consisting of $R$-modules of finite projective dimension, and by $\fgd R$ the subcategory of $\mod R$ consisting of $R$-modules of finite Gorenstein dimension.
\end{dfn}

\begin{prop}\label{4}
The following equalities hold true.
\begin{align*}
\red(\fpd R)&=\{M\in\mod R\mid\text{$M$ has finite reducing projective dimension}\},\\
\sred(\fpd R)&=\{M\in\mod R\mid\text{$M$ has finite ${}^\ast$reducing projective dimension}\},\\
\red(\fgd R)&=\{M\in\mod R\mid\text{$M$ has finite reducing Gorenstein dimension}\},\\
\sred(\fgd R)&=\{M\in\mod R\mid\text{$M$ has finite ${}^\ast$reducing Gorenstein dimension}\}.
\end{align*}
\end{prop}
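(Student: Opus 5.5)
This is essentially an unwinding of definitions, and I will treat all four equalities at once. Let $\I_R$ be either $\pd_R$ or $\Gdim_R$, and let $\X$ be $\fpd R$ or $\fgd R$ accordingly, so that $\X=\{M\in\mod R\mid \I_R(M)<\infty\}$. I will first check that $\red\X=\{M\mid \ri_R(M)<\infty\}$, and then explain why the same argument applies verbatim to $\sred\X$ and $\sri_R$.

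For $M$ with $\I_R(M)<\infty$: by Definition \ref{maindef}(1) we have $\ri_R(M)=0<\infty$. Also $M\in\X\subseteq\red\X$ by (K2) of Theorem \ref{3}, taking $r=0$. So $M$ lies in both sides.

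Now suppose $\I_R(M)=\infty$. If $\ri_R(M)<\infty$, then clause (3) of Definition \ref{maindef} does not apply, so clause (2) gives a family $\{0\to K_{i-1}^{\oplus a_i}\to K_i\to\syz^{n_i}K_{i-1}^{\oplus b_i}\to0\}_{i=1}^r$ with $r>0$, $K_0=M$ and $\I_R(K_r)<\infty$. That is, $K_r\in\X$, and this family witnesses $M\in\red\X$. Conversely, suppose $M\in\red\X$ via some family of length $r\ge0$. Since $K_0=M\notin\X$, we must have $r>0$, so the family is admissible in clause (2), and $\ri_R(M)\le r<\infty$.

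The only point needing care is this mismatch of index conventions: $r\ge0$ in Definition \ref{maindef2} versus $r>0$ in Definition \ref{maindef}(2). It is resolved exactly by the case split on whether $\I_R(M)<\infty$.

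For the starred versions, Definition \ref{D1} defines $\sri_R$ by the same clauses with the multiplicities $a_i,b_i$ replaced by $1$. This matches the definition of $\sred\X$, so the identical case analysis gives the second and fourth equalities.
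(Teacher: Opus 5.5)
Your proposal is correct and matches the paper's proof: both are a direct unwinding of Definitions \ref{maindef}, \ref{maindef2} and \ref{D1}, and both prove one equality and note that the others follow the same way. Where the paper splits on $r=0$ versus $r>0$, you split on whether $\I_R(M)<\infty$. That handles the $r\ge0$ versus $r>0$ mismatch a little more explicitly than the paper, whose $r>0$ case tacitly needs $\I_R(M)=\infty$ for clause (2) to apply.
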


\begin{proof}
We only prove the first equality, because the others are shown similarly.
Let $M$ be an $R$-module.

$(\supseteq)$
If $\rpd M=0$, then $M$ has finite projective dimension and we get $M\in\fpd R\subseteq\red(\fpd R)$.
Assume that we have $\rpd M=r$ for some positive integer $r$.
Then there exists a family $\{0\to K_{i-1}^{\oplus a_i}\to K_i\to\syz^{n_i}K_{i-1}^{\oplus b_i}\to0\}_{i=1}^r$ of exact sequences of $R$-modules such that $n_i\ge0$, $a_i,b_i>0$, $K_0=M$ and $\pd_RK_r<\infty$.
The module $K_r$ belongs to $\fpd R$, and we observe that $M$ belongs to $\red(\fpd R)$.

$(\subseteq)$
Suppose that there exists a family $\{0\to K_{i-1}^{\oplus a_i}\to K_i\to\syz^{n_i}K_{i-1}^{\oplus b_i}\to0\}_{i=1}^r$ of exact sequences in $\mod R$ such that $r,n_i\ge0$, $a_i,b_i>0$, $K_0=M$ and $K_r\in\fpd R$.
If $r=0$, then we have $M=K_0=K_r\in\fpd R$, and get $\rpd M=0<\infty$.
If $r>0$, then $\rpd M\le r$ and $M$ has finite reducing projective dimension.
\end{proof}

\begin{cor}
Let $R$ be a singular local complete intersection.
Then $\fpd R$ is not ${}^\ast$reducingly closed.
Hence it is not reducingly closed, either.
\end{cor}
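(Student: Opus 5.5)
The plan is to produce a module $M$ that lies in $\sred(\fpd R)$ but not in $\fpd R$. This contradicts $\fpd R=\sred(\fpd R)$. The second sentence then follows at once from the inclusions $\fpd R\subseteq\sred(\fpd R)\subseteq\red(\fpd R)$.

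The natural candidate is the residue field $k$, or more conveniently a high syzygy of it. Since $R$ is singular, $\pd_R k=\infty$. Since $R$ is a complete intersection, Bergh's theory of reducible complexity applies: every module over a complete intersection has reducible complexity.

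Concretely, take $M$ to be a maximal Cohen--Macaulay module of infinite projective dimension, for instance $M=\syz^d k$ with $d=\dim R$. It has finite complexity $c\ge1$. The key step is to construct short exact sequences of the form
$$
0\to M\to K\to\syz^{n}M\to0
$$
with $K$ of smaller complexity. I would get these from the Eisenbud operators, i.e.\ the cohomology operators $\chi\in\Ext_R^{2}(M,M)$ (or a suitable power in $\Ext^{n+1}_R(\syz^{n}M, M)$), exactly as in Bergh's reducible complexity, or as in Avramov's results on complete intersections. Such an element of $\Ext^{n+1}_R(M,M)\cong\Ext^1_R(\syz^{n}M,M)$, for $n\ge1$, gives a short exact sequence $0\to M\to K\to\syz^{n}M\to0$. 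Choosing it to be a "regular" element on the Ext module drops the complexity by one.

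Iterating $c$ times produces a family $\{0\to K_{i-1}\to K_i\to\syz^{n_i}K_{i-1}\to0\}_{i=1}^c$ with $K_0=M$ and $K_c$ of complexity $0$, i.e.\ of finite projective dimension. This shows $M\in\sred(\fpd R)$, which is precisely the statement $\sri_R$-$\pd M<\infty$ via Proposition~\ref{4}.

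The main obstacle is justifying the existence of these complexity-reducing sequences with $a_i=b_i=1$, which is what the ${}^\ast$ version demands. I would invoke Bergh's theorem that modules over complete intersections have reducible complexity, whose definition uses exactly sequences $0\to K_{i-1}\to K_i\to\syz^{n_i}K_{i-1}\to0$. Alternatively I would cite \cite{red}, where finiteness of the ${}^\ast$reducing projective dimension over complete intersections is shown. With that input the remaining steps are formal.
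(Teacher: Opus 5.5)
Your proposal is correct and is essentially the paper's argument. The paper exhibits $k$ itself as an element of $\sred(\fpd R)\setminus\fpd R$: it cites \cite[Corollary 3.8]{red} for the finiteness of the ${}^\ast$reducing projective dimension of $k$, then uses Proposition~\ref{4} together with $\pd_R k=\infty$. Your use of $\syz^d k$ and of Bergh's theorem that every module over a complete intersection has reducible complexity (whose defining sequences already have $a_i=b_i=1$) is just another source for the same input, and your deduction of the second sentence from $\fpd R\subseteq\sred(\fpd R)\subseteq\red(\fpd R)$ matches the paper.
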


\begin{proof}
The residue field $k$ of $R$ has finite ${}^\ast$reducing projective dimension as an $R$-module by \cite[Corollary 3.8]{red}.
Proposition \ref{4} implies that $k$ belongs to $\sred(\fpd R)$.
Since $R$ is singular, $k$ does not belong to $\fpd R$.
\end{proof}

\begin{ques}
Is there an example of a ring $R$ such that $\fgd R$ is not reducingly closed?
\end{ques}

\begin{dfn}
Suppose that $R$ is a local ring.
We denote by $\fcx(R)$ the subcategory of $\mod R$ consisting of $R$-modules of finite complexity.
\end{dfn}

\begin{cor}
Let $R$ be a local ring.
One then has an inclusion $\sred(\fpd R)\subseteq\fcx(R)$.
\end{cor}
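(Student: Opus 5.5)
The plan is to show that $\fcx(R)$ is ${}^\ast$reducingly closed and contains $\fpd R$. The inclusion then follows from monotonicity of $\sred(-)$ and idempotence (Theorem \ref{3}):
$$\sred(\fpd R)\subseteq\sred(\fcx(R))=\fcx(R).$$
The containment $\fpd R\subseteq\fcx(R)$ is immediate, since a module of finite projective dimension has Betti numbers eventually zero, hence complexity $0$. So everything reduces to the ${}^\ast$-analogue of Lemma \ref{1}. By induction on $r$, it suffices to prove the following: given an exact sequence $0\to M\to K\to\syz^nM\to0$ with $n\ge0$ and $K$ of finite complexity, the module $M$ also has finite complexity.

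For the key step I would use the behavior of Betti numbers along short exact sequences. From $0\to M\to K\to\syz^nM\to 0$, the long exact sequence of $\Tor^R(-,k)$ gives
$$\beta_i(M)\le\beta_i(K)+\beta_{i+1}(\syz^nM)=\beta_i(K)+\beta_{i+n+1}(M).$$
This inequality goes the wrong way for an induction, since it bounds $\beta_i(M)$ by a later Betti number of $M$. So I would look instead at the other portion of the long exact sequence:
$$\Tor_{i+1}(K,k)\to\Tor_{i+1}(\syz^nM,k)\to\Tor_i(M,k).$$
This gives $\beta_{i+n+1}(M)\le\beta_{i+1}(K)+\beta_i(M)$. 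Iterating, $\beta_{j}(M)\le\beta_{j-(n+1)}(M)+\beta_{j-n}(K)$, and unrolling down to indices $0,\dots,n$ yields
$$\beta_j(M)\le C+\sum_{t\le j}\beta_t(K),$$
where $C=\max_{0\le s\le n}\beta_s(M)$.

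If $\beta_t(K)\le a t^{c-1}$ for large $t$, then the sum is bounded by a polynomial of degree $c$. Hence $\beta_j(M)$ is polynomially bounded and $\operatorname{cx}M\le\operatorname{cx}K+1<\infty$. The case $n=0$ is included (there $\syz^0M=M$), and the same argument applies verbatim.

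The main obstacle is choosing the correct segment of the long exact Tor sequence, so that $\beta_{j}(M)$ is controlled by strictly earlier Betti numbers of $M$. Once that recursion is in place, the polynomial bound is routine summation. Locality of $R$ is needed so that Betti numbers over the residue field $k$, and hence complexity, are defined.
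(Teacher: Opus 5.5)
Your argument is correct, but it does not follow the paper's route.

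The paper's proof is a two-line citation. The second equality of Proposition \ref{4} identifies $\sred(\fpd R)$ with the modules of finite ${}^\ast$reducing projective dimension. The cited Theorem 3.6 of Araya--Takahashi then says such modules have finite complexity.

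You instead prove directly that $\fcx(R)$ is ${}^\ast$reducingly closed. You use the ${}^\ast$-version of Lemma \ref{1} together with the estimate $\beta_{i+n+1}(M)\le\beta_{i+1}(K)+\beta_i(M)$, read off from $\Tor_{i+1}^R(K,k)\to\Tor_{i+1}^R(\syz^nM,k)\to\Tor_i^R(M,k)$. This is valid even if $\syz^nM$ carries free summands, since only Betti numbers in degrees $\ge1$ enter.

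What your route buys:
\begin{itemize}
\item It is self-contained.
\item It gives a stronger conclusion, $\sred(\fcx R)=\fcx R$, with the explicit bound $\operatorname{cx}K_0\le\operatorname{cx}K_r+r$ along any chain. So $\operatorname{cx}M$ is at most the length of any chain from $M$ to a module in $\fpd R$.
\item It shows exactly why the statement concerns $\sred$ rather than $\red$. With multiplicities the recursion becomes $b\,\beta_{i+n+1}(M)\le\beta_{i+1}(K)+a\,\beta_i(M)$, which allows exponential growth when $a>b$. For example, over a local ring with $\mathfrak m^2=0$ and $\dim_k\mathfrak m=e\ge2$, the sequence $0\to k^{\oplus e}\to R\to k\to0$ puts $k$ in $\red(\fpd R)$, although $k$ has infinite complexity.
\end{itemize}
The paper's route buys brevity, at the cost of outsourcing the Betti-number estimate.

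One small slip: $\sred(\fcx R)=\fcx R$ is not an instance of idempotence (K3) from Theorem \ref{3}. It is the closedness you go on to prove, and the only formal property of $\sred(-)$ you actually use is monotonicity.
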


\begin{proof}
The assertion follows from the second equality in Proposition \ref{4} and \cite[Theorem 3.6]{red}.
\end{proof}

\section{Reducingly closed properties}\label{E}

In this section, we show that several concrete subcategories are reducingly closed.
In particular, for certain homological properties in the representation theory of algebras, such as the generalized Auslander--Reiten conjecture, we prove that the full subcategories consisting of modules satisfying these properties are reducingly closed.

\begin{dfn}
Let $R$ be a Cohen--Macaulay local ring.
\begin{enumerate}[(1)]
\item
We denote by $\cm(R)$ the subcategory of $\mod R$ consisting of maximal Cohen--Macaulay $R$-modules.
\item
An $R$-module $M$ is called {\em Ulrich} provided that $M$ is a maximal Cohen--Macaulay $R$-module such that the (Hilbert--Samuel) multiplicity of $M$ is equal to the minimal number of generators of $M$.
We denote by $\ul(R)$ the subcategory of $\mod R$ consisting of Ulrich $R$-modules.
\end{enumerate}
\end{dfn}

\begin{thm}\label{6}
Let $R$ be a Cohen--Macaulay local ring.
Then $\cm(R)$ and $\ul(R)$ are reducingly closed.
\end{thm}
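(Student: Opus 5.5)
By Lemma \ref{1}, it is enough to check condition (2) there for each subcategory. So fix an exact sequence
$$
0\to M^{\oplus a}\to K\to\syz^nM^{\oplus b}\to0
$$
with $n\ge0$ and $a,b>0$. We must show that $K\in\cm(R)$ implies $M\in\cm(R)$, and that $K\in\ul(R)$ implies $M\in\ul(R)$. Put $d=\dim R=\depth R$. If $M=0$ there is nothing to prove, so assume $M\ne0$.

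For $\cm(R)$ I would argue by contradiction with the depth lemma, as in the proof of Theorem \ref{B3}. Suppose $t:=\depth M<d$. The depth lemma applied to the syzygy sequences gives $\depth\syz^nM\ge\min\{d,t+n\}\ge t$; for $n=0$ this is an equality, which is harmless. Now apply the depth lemma to the displayed sequence, using $\depth M^{\oplus a}=\depth M$ and $\depth\syz^nM^{\oplus b}=\depth\syz^nM$. This yields
$$
t=\depth M^{\oplus a}\ge\min\{\depth K,\ \depth\syz^nM+1\}\ge\min\{d,\ t+1\}>t,
$$
a contradiction. Hence $\depth M=d$ and $M$ is maximal Cohen--Macaulay.

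For $\ul(R)$, the previous paragraph first gives that $M$ is maximal Cohen--Macaulay. Then $\syz^nM$ is maximal Cohen--Macaulay too, being either $M$ itself or a syzygy of a maximal Cohen--Macaulay module over a Cohen--Macaulay ring. So the displayed sequence consists of modules of dimension $d$, and the multiplicity $e(-)$ is additive on it. The minimal number of generators $\mu(-)$ is subadditive on short exact sequences. I would also use the standard inequality $\mu(N)\le e(N)$ for every maximal Cohen--Macaulay module $N$. To prove it, pass to $R[t]_{\mathfrak m[t]}$ to make the residue field infinite, which changes neither $e$ nor $\mu$. Then take a minimal reduction $Q$ of $\mathfrak m$ generated by a system of parameters, and note that $e(N)=\ell(N/QN)\ge\ell(N/\mathfrak mN)=\mu(N)$. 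Combining these facts gives
$$
e(K)=a\,e(M)+b\,e(\syz^nM)\ge a\,\mu(M)+b\,\mu(\syz^nM)\ge\mu(K)=e(K).
$$
So every inequality is an equality. Since $e(M)\ge\mu(M)$, $e(\syz^nM)\ge\mu(\syz^nM)$ and $a>0$, this forces $e(M)=\mu(M)$, and $M$ is Ulrich.

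The only step with real content is the Cohen--Macaulay part, namely controlling $\depth\syz^nM$ from below by $\depth M$. Once that is in place, the Ulrich case reduces to the standard inequality $\mu\le e$ for maximal Cohen--Macaulay modules.
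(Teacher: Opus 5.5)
Your proof is correct and follows the paper's route: reduce to condition (2) of Lemma \ref{1}, rule out $\depth M<\depth R$ with the depth lemma (using $\depth\syz^nM\ge\depth M$), and then deduce Ulrichness of $M$ from the resulting exact sequence of maximal Cohen--Macaulay modules. The only difference is that where the paper cites Lemma 5.5(2) of \cite{umm}, you prove the needed fact directly, via additivity of $e$, subadditivity of $\mu$, and the inequality $\mu\le e$ for maximal Cohen--Macaulay modules.
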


\begin{proof}
We show the assertion on $\cm(R)$ in (1) and that on $\ul(R)$ in (2) below.

(1) Assume that there is a short exact sequence of the form $0\to M^{\oplus a}\to K\to\syz^nM^{\oplus b}\to0$ with $n\ge0$, $a,b>0$ and $K\in\cm(R)$.
Note that $\depth M \le  \depth \Omega^n M$ since $R$ is Cohen--Macaulay. 
Suppose that $M$ is not maximal Cohen--Macaulay.
Then $\depth M < \depth R = \depth K$. 
Applying the depth lemma to the short exact sequence we obtain $\depth \Omega^nM^b = \depth M^a -1 < \depth M$. 
This is a contradiction. 
Therefore, $M$ must be maximal Cohen--Macaulay, that is to say, $M$ belongs to $\cm(R)$.
Applying Lemma \ref{1}, we are done. 

(2) Let $0\to M^{\oplus a}\to K\to\syz^nM^{\oplus b}\to0$ be an exact sequence with $n\ge0$, $a,b>0$ and $K\in\ul(R)$.
Then $K$ is maximal Cohen--Macaulay, and so is $M$ by the argument given in the proof of (1).
It follows from \cite[Lemma 5.5(2)]{umm} that $M$ is Ulrich.
Lemma \ref{1} completes the proof of the assertion.
\end{proof}

\begin{cor}\label{7}
Let $R$ be a Cohen--Macaulay local ring.
Let $\{0\to K_{i-1}^\oplus\to K_i\to\syz^{n_i}K_{i-1}^\oplus\to0\}_{i=1}^r$ be a family of exact sequences.
If $K_r$ is maximal Cohen--Macaulay (resp. Ulrich), then so is $K_0$.
\end{cor}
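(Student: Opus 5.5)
The plan is to combine Theorem \ref{6} with the definition of the reducing closure. Take the given family $\{0\to K_{i-1}^{\oplus a_i}\to K_i\to\syz^{n_i}K_{i-1}^{\oplus b_i}\to0\}_{i=1}^r$ and let $\X$ be $\cm(R)$ (resp. $\ul(R)$). Since $K_0$ is the starting module and $K_r\in\X$, the family itself witnesses that $K_0\in\red\X$, by Definition \ref{maindef2}. Theorem \ref{6} says $\X$ is reducingly closed, so $\red\X=\X$, and therefore $K_0\in\X$.

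One could instead argue by descending induction on $i$ using Lemma \ref{1}, condition (2), applied to each sequence. If $K_i\in\X$, the $i$th exact sequence $0\to K_{i-1}^{\oplus a_i}\to K_i\to\syz^{n_i}K_{i-1}^{\oplus b_i}\to0$ has middle term in $\X$. Condition (2), which holds because $\X$ is reducingly closed, then gives $K_{i-1}\in\X$. Starting from $i=r$ and descending to $i=1$ yields $K_0\in\X$.

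There is no real obstacle here. The only point to check is notational: the statement writes $K_{i-1}^\oplus$ without exponents, and I read this as $K_{i-1}^{\oplus a_i}$ and $K_{i-1}^{\oplus b_i}$ with $a_i,b_i>0$, exactly as in Definition \ref{maindef2}. If $r=0$, then $K_0=K_r$ and the claim is immediate.
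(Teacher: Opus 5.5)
Your proposal is correct and follows the paper's proof exactly: the family witnesses $K_0\in\red\X$ for $\X=\cm(R)$ or $\ul(R)$, and Theorem \ref{6} gives $\red\X=\X$. Your alternative descending induction via Lemma \ref{1}(2) is just that lemma's own proof unpacked, so it adds nothing beyond the direct argument.
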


\begin{proof}
Let $\X$ be either $\cm(R)$ or $\ul(R)$.
If $K_r$ is in $\X$, then $K_0$ is in $\red\X$.
Theorem \ref{6} implies $\X=\red\X$.
\end{proof}

Next we consider the following four properties. 

\begin{dfn}
Let $M$ be an $R$-module.
\begin{enumerate}[(1)]
\item
We say that $M$ satisfies the {\em Auslander condition} provided that there exists an integer $b_M\ge0$ such that for every $R$-module $N$, if $\Ext_R^{\gg0}(M,N)=0$, then $\Ext_R^{>b_M}(M,N)=0$.
\item
We say that $M$ satisfies the {\em uniform Auslander condition} provided that there exists an integer $b_M\ge0$ such that for every nonzero $R$-module $N$, if $\Ext_R^{\gg0}(M,N)=0$, then $\Ext_R^{>b_M}(M,N)=0$ and $\Ext_R^{b_M}(M,N)\neq0$.
\item
We say that $M$ satisfies the {\em generalized Auslander--Reiten conjecture} provided that if there exists an integer $n\ge0$ such that $\Ext_R^{>n}(M,M\oplus R)=0$, then $M$ has projective dimension at most $n$.
\item
We say that $M$ satisfies {\em dependence of the total reflexivity conditions} provided that if $\Ext_R^{>0}(M,R)=0$, then $M$ is totally reflexive.
\end{enumerate}
\end{dfn}

\begin{dfn}
We deal with the following subcategories of $\mod R$:
\begin{align*}
\ac(R)&=\{M\in\mod R\mid\text{$M$ satisfies the Auslander condition}\},\\
\uac(R)&=\{M\in\mod R\mid\text{$M$ satisfies the uniform Auslander condition}\},\\ 
\garc(R)&=\{M\in\mod R\mid\text{$M$ satisfies the generalized Auslander--Reiten conjecture}\},\\
\dtrc(R)&=\{M\in\mod R\mid\text{$M$ satisfies dependence of the total reflexivity conditions}\}.
\end{align*}
\end{dfn}

\begin{thm}\label{2}
One has that $\ac(R)$, $\uac(R)$ and $\garc(R)$ are reducingly closed subcategories of $\mod R$.
\end{thm}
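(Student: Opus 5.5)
We use Lemma \ref{1}. For each of the three subcategories $\X\in\{\ac(R),\uac(R),\garc(R)\}$ we fix an exact sequence
$$0\to M^{\oplus a}\to K\to\syz^nM^{\oplus b}\to0,\qquad n\ge0,\ a,b>0,\ K\in\X,$$
and show that $M\in\X$. The tool throughout is the long exact $\Ext$ sequence obtained by applying $\Hom_R(-,N)$, together with the dimension shift $\Ext^i_R(\syz^nX,N)\cong\Ext^{i+n}_R(X,N)$ for $i\ge1$. For $i\ge1$ this gives the exact sequence
$$\Ext^{i+n}_R(M,N)^{\oplus b}\to\Ext^i_R(K,N)\to\Ext^i_R(M,N)^{\oplus a}\to\Ext^{i+n+1}_R(M,N)^{\oplus b}.\qquad(\ast)$$
The basic observation is this. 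If $\Ext^{>c}_R(K,N)=0$, then for every $i>c$ the group $\Ext^i_R(M,N)^{\oplus a}$ embeds into $\Ext^{i+n+1}_R(M,N)^{\oplus b}$. Iterating, $\Ext^i_R(M,N)$ embeds into a finite direct sum of copies of $\Ext^{i+k(n+1)}_R(M,N)$ for every $k\ge1$. Hence, if $\Ext^{\gg0}_R(M,N)=0$, then $\Ext^{>c}_R(M,N)=0$.

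\emph{Auslander condition.} Suppose $\Ext^{\gg0}_R(M,N)=0$. By $(\ast)$, $\Ext^{\gg0}_R(K,N)=0$, so $\Ext^{>b_K}_R(K,N)=0$. The basic observation then gives $\Ext^{>b_K}_R(M,N)=0$. So $M$ satisfies the Auslander condition with $b_M=b_K$.

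\emph{Uniform Auslander condition.} Take the same argument with $N\ne0$; it gives $\Ext^{>b_K}_R(M,N)=0$, and it remains to show $\Ext^{b_K}_R(M,N)\neq0$. Suppose instead that $\Ext^{b_K}_R(M,N)=0$.
\begin{itemize}
\item If $b_K\ge1$, the left term of $(\ast)$ at $i=b_K$ is $\Ext^{b_K+n}_R(M,N)^{\oplus b}$. This vanishes: it lies above $b_K$ when $n\ge1$, and equals the assumed zero group when $n=0$. The right-hand term $\Ext^{b_K}_R(M,N)^{\oplus a}$ vanishes too, so $(\ast)$ forces $\Ext^{b_K}_R(K,N)=0$, a contradiction.
\item If $b_K=0$, the hypothesis is $\Ext^{\ge0}_R(M,N)=0$, and the left-hand term becomes $\Hom_R(\syz^nM,N)$, which need not vanish. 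This is the step I expect to be the main obstacle. My first attempt will exploit the surjection $\Hom_R(\syz^nM^{\oplus b},N)\to\Hom_R(K,N)$, which is onto since $\Hom_R(M^{\oplus a},N)=0$. I will compare this with the sequences $0\to\syz^{j}M\to F_{j-1}\to\syz^{j-1}M\to0$, whose relevant $\Ext$ terms all vanish. The aim is to show that either $\Hom_R(K,N)=0$, or one can pass to a suitable nonzero module such as $\Hom$-images of free modules in $N$ to get a contradiction. (Note also $M\neq0$, since $K\neq0$ is forced by $K\in\uac(R)$.)
\end{itemize}

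\emph{Generalized Auslander--Reiten conjecture.} Suppose $\Ext^{>m}_R(M,M\oplus R)=0$.
\begin{enumerate}
\item First, $(\ast)$ with $N=R$ gives $\Ext^{>m}_R(K,R)=0$.
\item Next, dimension shifting along the syzygy sequences of $M$ with free middle terms gives $\Ext^{>m+n+1}_R(M,\syz^nM)=0$. Combining this with $(\ast)$ for $N=M$ and $N=\syz^nM$, and with the given sequence in the second variable, yields $\Ext^{\gg0}_R(K,K\oplus R)=0$.
\item Since $K\in\garc(R)$, this gives $\pd_RK<\infty$.
\item The basic observation, applied with $c=\pd_RK$ and arbitrary $N$, gives $\Ext^{>\pd K}_R(M,-)=0$, so $\pd_RM<\infty$.
\item Finally, for a module of finite projective dimension, after localizing, $\pd_RM=\sup\{i\mid\Ext^i_R(M,R)\ne0\}$. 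This supremum is at most $m$ by hypothesis, so $M\in\garc(R)$.
\end{enumerate}
Lemma \ref{1} then completes the proof in all three cases.
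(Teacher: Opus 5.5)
Your treatment of $\ac(R)$, and of $\uac(R)$ when $b_K\ge1$, is correct and is the paper's argument. The case you leave open, $b_K=0$ with $n\ge1$, is a genuine gap, and the route you sketch cannot close it. If $\Hom_R(M,N)=0=\Ext^{>0}_R(M,N)$, then $\Hom_R(K,N)\cong\Hom_R(\syz^nM,N)^{\oplus b}$. For $n=1$ the sequence $0\to\syz M\to F_0\to M\to0$ then gives $\Hom_R(\syz M,N)\cong\Hom_R(F_0,N)\neq0$, so $K$ yields no contradiction. The paper's proof has the same hole: its sequence $\Ext^{b+n}_R(M,L)^{\oplus c}\to\Ext^b_R(K,L)\to\Ext^b_R(M,L)^{\oplus a}$ is valid only when $b\ge1$ or $n=0$.

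Without locality the case is actually false. Take $R=k\times k$, $M=0\times k$ and $\syz M=\ker(R\to M)=k\times0$. The split sequence $0\to M\to R\to\syz M\to0$ has $K=R\in\uac(R)$ with $b_K=0$. But $N=k\times0\neq0$ satisfies $\Ext^{\ge0}_R(M,N)=0$, so $M\notin\uac(R)$. For $R$ local, with minimal syzygies so that $M\neq0$, a different idea finishes the case. For nonzero $M,N$ one has $\grade(M,N)\le\depth N<\infty$, so once you know $\Ext^{>0}_R(M,N)=0$, automatically $\Hom_R(M,N)\neq0$, and $b_M=0$ works.

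In the $\garc(R)$ part, step (4) does not follow. Your basic observation gives $\Ext^{>c}_R(M,N)=0$ only for those $N$ with $\Ext^{\gg0}_R(M,N)=0$, and that is unknown for arbitrary $N$. Indeed $\pd_RK<\infty$ alone cannot force $\pd_RM<\infty$. Over $R=k[\![x,y]\!]/(xy)$ the module $M=R/(x)$ has $\syz^2M\cong M$, so $0\to M\to R\to\syz M\to0$ is exact with $K=R$, yet $\pd_RM=\infty$. The hypothesis $\Ext^{>m}_R(M,M\oplus R)=0$ has to enter at this point.

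A repair: put $j=\pd_RK+1$. Dimension shifting gives $\Ext^{>m+j}_R(M,\syz^jM)=0$, so the basic observation applies to $N=\syz^jM$. It yields $\Ext^1_R(\syz^{j-1}M,\syz^jM)\cong\Ext^j_R(M,\syz^jM)=0$. Hence $0\to\syz^jM\to F_{j-1}\to\syz^{j-1}M\to0$ splits, $\pd_RM\le j-1$, and your step (5) then applies. This is shorter than the paper's route, which passes through the stable isomorphisms $(\syz^{m+n}M)^{\oplus a^l}\cong(\syz^{m+n+l(n+1)}M)^{\oplus c^l}$ and likewise ends by showing $\Ext^1_R(N,\syz N)=0$ for a high syzygy $N$.
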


\begin{proof}
Let $0\to M^{\oplus a}\to K\to\syz^nM^{\oplus c}\to0$ be an exact sequence in $\mod R$ with $n\ge0$ and $a,c>0$.
We show the assertions on $\ac(R)$, $\uac(R)$ and $\garc(R)$ in (1), (2) and (3) below, respectively.

(1) Let $K\in\ac(R)$.
There is $b\ge0$ such that $\Ext_R^{>b}(K,H)=0$ for all $R$-modules $H$ with $\Ext_R^{\gg0}(K,H)=0$.

Fix an $R$-module $L$ with $\Ext_R^{\gg0}(M,L)=0$.
The induced exact sequence $\Ext_R^{i+n}(M,L)^{\oplus c}\to\Ext_R^i(K,L)\to\Ext_R^i(M,L)^{\oplus a}$ for each $i>0$ yields $\Ext_R^{\gg0}(K,L)=0$, and so $\Ext_R^{>b}(K,L)=0$.
The induced exact sequence
$$
\Ext_R^i(K,L)\to\Ext_R^i(M,L)^{\oplus a}\to\Ext_R^{i+n+1}(M,L)^{\oplus c}\to\Ext_R^{i+1}(K,L)
$$
for $i>0$ shows $\Ext_R^i(M,L)^{\oplus a}\cong\Ext_R^{i+n+1}(M,L)^{\oplus c}$ for all $i>b$.
We get an isomorphism $\Ext_R^i(M,L)^{\oplus a^l}\cong\Ext_R^{i+l(n+1)}(M,L)^{\oplus c^l}$ for all $i>b$ and $l>0$.
Choosing a big enough integer $l$, we see that $\Ext_R^i(M,L)^{\oplus a^l}=0$ for all $i>b$.
Thus $\Ext_R^{>b}(M,L)=0$ for all $R$-modules $L$ with $\Ext_R^{\gg0}(M,L)=0$.
As $b$ is independent of the choice of $L$, the module $M$ belongs to $\ac(R)$.
Lemma \ref{1} implies that $\ac(R)$ is reducingly closed.

(2) The proof follows almost the same argument of (1). 
Let $K\in\uac(R)$.
Then there exists an integer $b\ge0$ such that $\Ext_R^{>b}(K,H)=0$ and $\Ext_R^{b}(K,H)\neq 0$ for all $R$-modules $H \neq 0$ with $\Ext_R^{\gg0}(K,H)=0$.

Fix an $R$-module $L \neq 0$ with $\Ext_R^{\gg0}(M,L)=0$. 
Then, as in the proof of (1), we have $\Ext_R^{>b}(M,L)=0$ for all $R$-modules $L$ with $\Ext_R^{\gg0}(M,L)=0$.
Assume that $\Ext_R^{b}(M,L)=0$. 
Then $\Ext_R^{b+n}(M,L) = 0$ for $n \ge 0$. 
The exact sequence $\Ext_R^{b+n}(M,L)^{\oplus c}\to\Ext_R^{b}(K,L)\to\Ext_R^{b}(M,L)^{\oplus a}$ shows that $\Ext_R^{b}(K,L) = 0$. 
This is a contradiction, so that $\Ext_R^{b}(M,L)\neq0$. 
Hence, Lemma \ref{1} implies that $\uac(R)$ is reducingly closed.

(3) Let $K\in\garc(R)$.
Suppose that there exists an integer $m\ge0$ such that $\Ext_R^{>m}(M,M\oplus R)=0$.
The induced exact sequence $\Ext_R^{i+n}(M,M\oplus R)^{\oplus c}\to\Ext_R^i(K,M\oplus R)\to\Ext_R^i(M,M\oplus R)^{\oplus a}$ of Ext modules for every integer $i>0$ gives rise to the vanishing $\Ext_R^{>m}(K,M\oplus R)=0$.
Therefore, there are isomorphisms
$$
\Ext_R^i(K,\syz^nM)\cong\Ext_R^{i-1}(K,\syz^{n-1}M)\cong\cdots\cong\Ext_R^{i-n}(K,M)=0
$$
for every $i>m+n$.
From the induced exact sequence $\Ext_R^i(K,M)^{\oplus a}\to\Ext_R^i(K,K)\to\Ext_R^i(K,\syz^nM)^{\oplus c}$ for each $i>0$ we observe that $\Ext_R^{>m+n}(K,K)=0$.
Hence $\Ext_R^{>m+n}(K,K\oplus R)=0$.
Since $K\in\garc(R)$, we have that $\pd K\le m+n$.
Applying the $(m+n)$th syzygies, we get an exact sequence $0\to\syz^{m+n}M^{\oplus a}\to F\to\syz^{m+2n}M^{\oplus c}\to0$ with $F$ free, and we get $(\syz^{m+n}M)^{\oplus a}\cong(\syz^{m+2n+1}M)^{\oplus c}$ up to free summands.
Hence $(\syz^{m+n}M)^{\oplus a^l}\cong(\syz^{m+n+l(n+1)}M)^{\oplus c^l}$ up to free summands for all $l>0$.
Note that there are isomorphisms
$$
\Ext_R^i(M,\syz^jM)\cong\Ext_R^{i-1}(M,\syz^{j-1}M)\cong\cdots\cong\Ext_R^{i-j}(M,M)=0
$$
for all $j\ge0$ and $i>j+m$.
Choose $l$ so that $l(n+1)>m$.
Put $N=\syz^{m+n+l(n+1)-1}M$.
It holds that
$$
\begin{array}{l}
\Ext_R^1(N,\syz N)^{\oplus c^l}
\cong\Ext_R^{m+n+l(n+1)}(M,(\syz^{m+n+l(n+1)}M)^{\oplus c^l})\\
\phantom{\Ext_R^1(N,\syz N)^{\oplus c^l}}\cong\Ext_R^{m+n+l(n+1)}(M,(\syz^{m+n}M)^{\oplus a^l})
\cong\Ext_R^{l(n+1)}(M,M)^{\oplus a^l}=0.
\end{array}
$$
It follows that $\Ext_R^1(N,\syz N)=0$.
This vanishing implies that the exact sequence $0\to\syz N\to G\to N\to0$ with $G$ free splits, so that $N$ is free.
Hence $M$ has finite projective dimension.
Since $\Ext_R^{>m}(M,R)=0$, we see that $\pd_RM\le m$.
Thus $M$ belongs to $\garc(R)$.
Lemma \ref{1} implies that $\garc(R)$ is reducingly closed.
\end{proof}

\begin{dfn}
For two integers $m,n\ge0$ we denote by $\G_{n, m}$ the subcategory of $\mod R$ consisting of $R$-modules $M$ such that $\Ext_R^i(M,R)=0$ for all $1\le i\le n$ and that $\Ext_R^j(\tr M,R)=0$ for all $1\le j\le m$.
We put $\G_{\infty , m}=\bigcap_{i\ge0}\G_{i, m}$, $\G_{n, \infty}=\bigcap_{j\ge0}\G_{n, j}$ and $\G_{\infty , \infty}=\bigcap_{i,j\ge0}\G_{i, j}$.
Note that $\G_{\infty , \infty}$ coincides with the subcategory of $\mod R$ consisting of totally reflexive $R$-modules.
\end{dfn}

The subcategories $\G_{n, m}$ are related to one another.
Indeed, for $M \in \G_{n, m}$, we have $\syz M \in \G_{n-1, m+1}$ if $n>0$, and $\tr M \in \G_{m, n}$.
See \cite[Proposition 1.1.1]{I}.

Now we can prove the following theorem.

\begin{thm}\label{12}
Fix an integer $r\ge0$.
Let $\X$ be the subcategory of $\mod R$ consisting of modules $X$ such that
$$
\Ext_R^{>r}(X,R)=0\implies\gd_RX\le r
$$
holds.
Then $\X$ is a reducingly closed subcategory of $\mod R$.
\end{thm}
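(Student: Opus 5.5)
By Lemma \ref{1}, it suffices to take an exact sequence $0\to M^{\oplus a}\to K\to\syz^nM^{\oplus c}\to0$ with $n\ge0$, $a,c>0$ and $K\in\X$, and show $M\in\X$. So we assume $\Ext_R^{>r}(M,R)=0$ and aim to prove $\gd_RM\le r$.

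\emph{Step 1: pass the vanishing to $K$.} For each $i>r$ the long exact sequence gives
\[
\Ext_R^{i+n}(M,R)^{\oplus c}\to\Ext_R^i(K,R)\to\Ext_R^i(M,R)^{\oplus a},
\]
and both ends vanish, since $\Ext_R^{i}(\syz^nM,R)\cong\Ext_R^{i+n}(M,R)$ for $i>0$. Hence $\Ext_R^{>r}(K,R)=0$. As $K\in\X$, we get $\gd_RK\le r$, so $\syz^rK$ is totally reflexive.

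\emph{Step 2: pull total reflexivity back to a syzygy of $M$.} Taking $r$th syzygies (horseshoe lemma) gives
\[
0\to(\syz^rM)^{\oplus a}\to\syz^rK\oplus F\to(\syz^{r+n}M)^{\oplus c}\to0
\]
with $F$ free. Put $X=\syz^rM$. Then $\Ext_R^{>0}(X,R)=0$ and $\Ext_R^{>0}(\syz^nX,R)=0$. Dualizing the sequence, using $\Ext^1(\syz^nX^{\oplus c},R)=0$, gives an exact sequence of duals
\[
0\to(\syz^nX)^{*\oplus c}\to(\syz^rK\oplus F)^*\to X^{*\oplus a}\to0.
\]
The goal is to show that $X\in\G_{\infty,\infty}$, i.e.\ that $X$ is reflexive and $\Ext_R^{>0}(\tr X,R)=0$. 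Equivalently, $X^*$ should have vanishing $\Ext^{>0}(-,R)$ and $X\to X^{**}$ should be an isomorphism.

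The plan is an iteration argument in the spirit of the proof of Theorem \ref{2}. Write $G=\syz^rK\oplus F$. Suppose $X\in\G_{\infty,m}$. Then $\syz^nX\in\G_{\infty,m+n}$, by repeated use of the fact that $\syz$ maps $\G_{n,m}$ into $\G_{n-1,m+1}$, together with $\Ext^{>0}$-vanishing being preserved. The sequence $0\to X^{\oplus a}\to G\to(\syz^nX)^{\oplus c}\to0$ has $G$ totally reflexive and its third term in $\G_{\infty,m+n}$. I would then show that the first term lies in $\G_{\infty,m+n+1}$, i.e.\ that $\G_{\infty,k}$ behaves like a ``$k$-th syzygy of totally reflexive'' class. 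Use the duals sequence above: $\Ext^j(\tr X,R)$ for $j\ge1$ is controlled by $\Ext^{j}$ of the dual modules, and $\Ext^{j}(X^{*\oplus a},R)\cong\Ext^{j+1}((\syz^nX)^{*\oplus c},R)$ because $G^*$ is totally reflexive. Since $(\syz^nX)^*$ relates to $\tr\syz^nX$ by a syzygy shift, the vanishing range of $\Ext(\tr(-),R)$ should improve by $n+1\ge1$ at each pass. Starting from $m=0$ and iterating, this gives $X\in\G_{\infty,\infty}$. An alternative is the $\Ext^{j}(X^*,R)^{\oplus a}\cong\Ext^{j+n+1}(X^*,R)^{\oplus c}$-type periodicity of Theorem \ref{2}(1), shifted arbitrarily far.

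\emph{Step 3: conclude.} Since $\syz^rM$ is totally reflexive, $\gd_RM\le r$, so $M\in\X$, and Lemma \ref{1} finishes the proof.

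The main obstacle is Step 2: the bookkeeping of how $\tr$ and duals interact with the short exact sequence, which is needed to get the improvement of the index $m$ at each step. I expect to lean on the facts from \cite[Proposition 1.1.1]{I} quoted above, and on the standard exact sequence $0\to\Ext^1(\tr X,R)\to X\to X^{**}\to\Ext^2(\tr X,R)\to0$.
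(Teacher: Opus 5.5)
There is a genuine gap in Step 2. Your outer structure matches the paper's: reduce via Lemma \ref{1}, push $\Ext^{>r}(-,R)=0$ from $M$ to $K$ so that $G=\syz^rK\oplus F$ is totally reflexive, then iterate $X\in\G_{\infty,m}\Rightarrow X\in\G_{\infty,m+n+1}$ starting from $m=0$. But that implication is the whole content of the proof, and your sketch of it does not work as written.

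\textbf{The isomorphism is reversed.} From $0\to(\syz^nX)^{*\oplus c}\to G^*\to X^{*\oplus a}\to0$ and $\Ext^{>0}(G^*,R)=0$ one gets $\Ext^{j+1}(X^*,R)^{\oplus a}\cong\Ext^{j}((\syz^nX)^*,R)^{\oplus c}$ for $j\ge1$, not the direction you wrote.

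\textbf{The low degrees are not reached.} This is the more serious problem. Up to free summands, $X^*$ is a second syzygy of $\tr X$, so $\Ext^{>0}(X^*,R)$ only sees $\Ext^{\ge3}(\tr X,R)$. The corrected isomorphism only reaches $\Ext^{\ge4}(\tr X,R)$. Degrees $1$ and $2$, i.e.\ torsionlessness and reflexivity of $X$, are invisible to the dual sequence altogether, and degree $3$ is not covered by the isomorphism. Yet these are exactly what the first steps of the iteration from $m=0$ must produce. You cite $0\to\Ext^1(\tr X,R)\to X\to X^{**}\to\Ext^2(\tr X,R)\to0$ but never show that its ends vanish.

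\textbf{The periodicity alternative does not help.} Correctly oriented it reads $\Ext^{j+n+1}(X^*,R)^{\oplus a}\cong\Ext^{j}(X^*,R)^{\oplus c}$ for $j\ge1$. Unlike in Theorem \ref{2}(1), you have no a priori vanishing of $\Ext^{\gg0}(X^*,R)$ to shift against. So it only propagates vanishing upward from the same missing low degrees.

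\textbf{The missing idea} is to work with transposes rather than duals. Use the Auslander--Bridger six-term sequence
$0\to(\syz^nX^{\oplus c})^*\to G^*\to(X^{\oplus a})^*\to\tr\syz^nX^{\oplus c}\to\tr G\to\tr X^{\oplus a}\to0$.
The map $G^*\to(X^{\oplus a})^*$ is surjective because $\Ext^1(\syz^nX,R)=0$. This gives a short exact sequence $0\to\tr\syz^nX^{\oplus c}\to\tr G\to\tr X^{\oplus a}\to0$ with $\tr G$ totally reflexive. Hence $\Ext^i(\tr X,R)^{\oplus a}\cong\Ext^{i-1}(\tr\syz^nX,R)^{\oplus c}$ for all $i\ge2$, including $i=2$. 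Since $\syz^nX\in\G_{\infty,m+n}$, these vanish for $2\le i\le m+n+1$. The case $i=1$ follows from $X^{\oplus a}\subseteq G$ with $G$ torsionless.

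If you prefer to stay with duals, you can recover the same low-degree information another way. Apply the snake lemma to the natural map from $0\to X^{\oplus a}\to G\to(\syz^nX)^{\oplus c}\to0$ to its double dual, using $G\cong G^{**}$. This yields $\Ext^2(\tr X,R)^{\oplus a}\cong\Ext^1(\tr\syz^nX,R)^{\oplus c}$ and $\Ext^1(X^*,R)^{\oplus a}\cong\Ext^2(\tr\syz^nX,R)^{\oplus c}$. Some such argument has to be supplied before the iteration gets off the ground.
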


\begin{proof}
Assume there is an exact sequence $0\to M^{\oplus a}\to K\to\syz^nM^{\oplus b}\to0$ with $n\ge0$, $a,b>0$ and $K\in\X$.
By Lemma \ref{1}, it suffices to show that $M$ belongs to $\X$.
Taking the $r$th syzygies, we get an exact sequence
\begin{equation}\label{10}
0\to\syz^rM^{\oplus a}\xrightarrow{f}\syz^rK\xrightarrow{g}\syz^{n+r}M^{\oplus b}\to0.
\end{equation}

Let us show that the following implication holds true.
\begin{equation}\label{11}
m\in\Z_{\ge0},\,\syz^rM\in\G_{\infty , m}\implies \syz^rM\in\G_{\infty,m+n+1}.
\end{equation}
Suppose that $m\in\Z_{\ge0}$ and $\syz^rM\in\G_{\infty , m}$ hold true.
Then the syzygy $\syz^{n+r}M$ belongs to $\G_{\infty,m+n}$.
Applying the functor $\Ext_R^i(-,R)$ for $i>0$ to the exact sequence \eqref{10}, we see that $\syz^rK$ belongs to $\G_{\infty , 0}$.
Hence $\Ext_R^{>r}(K,R)=0$.
Since $K$ is in $\X$, we have $\gd_RK\le r$.
Therefore $\syz^rK$ is totally reflexive, that is, $\syz^rK\in\G_{\infty , \infty}$.
Set $(-)^\ast=\Hom_R(-,R)$.
By \cite[Lemma 3.9]{AB}, there is an exact sequence
$$
0\to(\syz^{n+r}M^{\oplus b})^\ast\xrightarrow{g^\ast}(\syz^rK)^\ast\xrightarrow{f^\ast}(\syz^rM^{\oplus a})^\ast\to\tr\syz^{n+r}M^{\oplus b}\xrightarrow{\tr g}\tr \syz^rK\xrightarrow{\tr f}\tr\syz^rM^{\oplus a}\to0.
$$
There is an exact sequence $(\syz^rK)^\ast\xrightarrow{f^\ast}(\syz^rM^{\oplus a})^\ast\to\Ext_R^1(\syz^{n+r}M^{\oplus b},R)$ and the last Ext module is isomorphic to $\Ext_R^{n+1}(\syz^rM,R)^{\oplus b}$, which vanishes.
We get a short exact sequence
$$
0\to\tr\syz^{n+r}M^{\oplus b}\to\tr\syz^rK\to\tr\syz^rM^{\oplus a}\to0.
$$
We have that $\tr\syz^{n+r}M\in\G_{m+n,\infty}$, that $\tr \syz^rK\in\G_{\infty , \infty}$ and that $\tr\syz^rM\in\G_{m , \infty}$.
For each $2\le i\le m+n+1$ it holds that $\Ext_R^i(\tr \syz^rM^{\oplus a},R)\cong\Ext_R^{i-1}(\tr\syz^{n+r}M^{\oplus b},R)=0$.
As $\syz^rK$ is totally reflexive, it is torsionless.
It is seen from \eqref{10} that $\syz^rM$ is torsionless as well, i.e., $\syz^rM$ is in $\G_{0, 1}$.
Hence $\tr\syz^rM$ is in $\G_{1, 0}$.
Thus $\Ext_R^i(\tr\syz^rM,R)=0$ for all $1\le i\le m+n+1$, that is, $\tr\syz^rM\in\G_{m+n+1,0}$.
We get $\syz^rM\in\G_{0,m+n+1}$ and obtain the containment $\syz^rM\in\G_{\infty , m}\cap\G_{0,m+n+1}=\G_{\infty,m+n+1}$.
The proof of \eqref{11} is completed.

Now, suppose that $\Ext_R^{>r}(M,R)=0$.
Then $\syz^rM$ belongs to $\G_{\infty , 0}$.
Applying \eqref{11} repeatedly, we observe that $\syz^rM$ belongs to $\G_{\infty,l(n+1)}$ for all integers $l\ge0$.
This means that $\syz^rM$ is in $\G_{\infty , \infty}$, i.e., $\syz^rM$ is totally reflexive.
Consequently, we obtain the inequality $\gd_RM\le r$.
We now conclude that $M$ belongs to $\X$.
\end{proof}

Applying Theorem \ref{12} to $r=0$, we get the following corollary.

\begin{cor}\label{9}
The subcategory $\dtrc(R)$ of $\mod R$ is reducingly closed.
\end{cor}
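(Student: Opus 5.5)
The corollary is the case $r=0$ of Theorem~\ref{12}, so the plan is to identify $\dtrc(R)$ with the subcategory $\X$ of that theorem for $r=0$ and then cite it.

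For $r=0$, the subcategory $\X$ of Theorem~\ref{12} consists of the modules $X$ for which $\Ext_R^{>0}(X,R)=0$ implies $\gd_RX\le 0$. So the only thing to check is that $\gd_RX\le 0$ is the same as $X$ being totally reflexive. This is the standard fact that a module has Gorenstein dimension zero exactly when it is totally reflexive, i.e.\ when $X\in\G_{\infty,\infty}$.

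We can also see this inside the paper's framework. In the proof of Theorem~\ref{12}, $\gd_RX\le r$ is read as "$\syz^rX$ is totally reflexive", and for $r=0$ we have $\syz^0X=X$. With this reading, the defining implication of $\X$ for $r=0$ is word for word the condition defining $\dtrc(R)$, hence $\X=\dtrc(R)$.

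Theorem~\ref{12} then says that $\X$ is reducingly closed, which gives the claim. There is no real obstacle. The only point needing a word of justification is the identification $\gd_RX\le 0 \iff X\in\G_{\infty,\infty}$, and it comes straight from the definitions and the note that $\G_{\infty,\infty}$ is the class of totally reflexive modules.
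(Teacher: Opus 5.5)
Your proposal is correct and is exactly the paper's argument: the paper obtains this corollary by applying Theorem~\ref{12} with $r=0$. Your justification that $\gd_RX\le 0$ is equivalent to $X$ being totally reflexive is the only thing needed to identify $\X$ with $\dtrc(R)$, and you supply it.
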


\begin{cor}\label{8}
Let $\P$ be one of the following properties:
\begin{itemize}
\item
the Auslander condition,
\item
the uniform Auslander condition,
\item
the generalized Auslander--Reiten conjecture, 
\item
dependence of the total reflexivity conditions.
\end{itemize}
Let $\{0\to K_{i-1}^{\oplus a_i} \to K_i\to\syz^{n_i}K_{i-1}^{\oplus b_i}\to0\}_{i=1}^r$ be a family of exact sequences.
If $K_r$ satisfies $\P$, so does $K_0$.
\end{cor}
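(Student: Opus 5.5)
The plan is to deduce Corollary \ref{8} from the closure properties already established, exactly as Corollary \ref{7} was deduced from Theorem \ref{6}. Let $\X$ be the subcategory of $\mod R$ consisting of the modules satisfying $\P$. Depending on $\P$, this is $\ac(R)$, $\uac(R)$, $\garc(R)$ or $\dtrc(R)$.

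First I would check that $\X$ is reducingly closed in each case. For the first three properties this is Theorem \ref{2}. For dependence of the total reflexivity conditions it is Corollary \ref{9}, which is the case $r=0$ of Theorem \ref{12}. In that case the condition ``$\Ext_R^{>0}(X,R)=0\implies\gd_RX\le 0$'' is precisely the statement that $X$ is totally reflexive whenever $\Ext_R^{>0}(X,R)=0$.

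Next, suppose we are given the family $\{0\to K_{i-1}^{\oplus a_i}\to K_i\to\syz^{n_i}K_{i-1}^{\oplus b_i}\to0\}_{i=1}^r$ with $K_r\in\X$. By Definition \ref{maindef2}, with $M=K_0$ and the same family, $K_0$ belongs to $\red\X$. Since $\X$ is reducingly closed, $\red\X=\X$, and therefore $K_0\in\X$, i.e. $K_0$ satisfies $\P$.

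Alternatively, one can avoid invoking the closure equality by inducting downward on $i$. At each step, condition (2) of Lemma \ref{1} is applied to the sequence $0\to K_{i-1}^{\oplus a_i}\to K_i\to\syz^{n_i}K_{i-1}^{\oplus b_i}\to0$ to pass from $K_i\in\X$ to $K_{i-1}\in\X$.

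There is no real obstacle here; the substantive work lies in Theorems \ref{2} and \ref{12}. The only points to watch are bookkeeping ones. The case $r=0$ is trivial. The exponents $a_i,b_i$ must be positive, as Lemma \ref{1} and Definition \ref{maindef2} require, and the statement of the corollary implicitly assumes this.
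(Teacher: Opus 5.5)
Your proposal is correct and follows essentially the same argument as the paper: take $\X$ to be the modules satisfying $\P$, note $K_0\in\red\X$, and use Theorem \ref{2} and Corollary \ref{9} to get $\red\X=\X$. Your alternative of descending induction via Lemma \ref{1}(2) is just that lemma's own proof unpacked, so it changes nothing substantive.
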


\begin{proof}
Let $\X$ be the subcategory of $\mod R$ consisting of $R$-modules that satisfy the property $\P$.
The module $K_r$ belongs to $\X$, so that $K_0$ belongs to $\red\X$.
It follows from Theorem \ref{2} and Corollary \ref{9} that $\X=\red\X$.
Consequently, the module $K_0$ satisfies the property $\P$.
\end{proof}

The corollary below is an immediate consequence of Corollaries \ref{7} and \ref{8}.

\begin{cor}\label{13}
Let $\I_R :(\mod R/\cong)\to\Z\cup\{\pm\infty\}$ be a map.
\begin{enumerate}[\rm(1)]
\item
Let $\P$ be the Auslander condition, 
uniform Auslander condition, 
the generalized Auslander--Reiten conjecture or dependence of total reflexivity conditions.
Suppose that all $R$-modules $M$ such that $\I_R (M)<\infty$ satisfy the property $\P$.
Then all $R$-modules $M$ such that $\ri _R (M)<\infty$ satisfy the property $\P$.
\item
Suppose that $R$ is a Cohen--Macaulay local ring.
Let $\P$ be maximal Cohen--Macaulayness or Ulrichness.
If all $R$-modules $M$ with $\I_R(M)<\infty$ satisfy $\P$, then so are all $R$-modules $M$ with $\ri _R (M)<\infty$.
\end{enumerate}
\end{cor}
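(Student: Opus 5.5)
The statement is a formal consequence of Corollaries \ref{7} and \ref{8}, so I will just unwind Definition \ref{maindef}. Let $M$ be an $R$-module with $\ri_R(M)<\infty$, and let $\P$ be one of the properties in the statement. The argument splits into two cases according to whether $\ri_R(M)$ is zero or positive.

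\textbf{Case $\ri_R(M)=0$.} By Definition \ref{maindef}(1) this means $\I_R(M)<\infty$. The hypothesis of the corollary then gives directly that $M$ satisfies $\P$.

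\textbf{Case $\ri_R(M)=r>0$.} By Definition \ref{maindef}(2) there is a family
$$\{0\to K_{i-1}^{\oplus a_i}\to K_i\to\syz^{n_i}K_{i-1}^{\oplus b_i}\to0\}_{i=1}^r$$
of exact sequences with $K_0=M$ and $\I_R(K_r)<\infty$. By hypothesis, $K_r$ satisfies $\P$.
\begin{enumerate}[\rm(1)]
\item For (1), $\P$ is the Auslander condition, the uniform Auslander condition, the generalized Auslander--Reiten conjecture, or dependence of the total reflexivity conditions. Corollary \ref{8} applies to this family and shows that $K_0=M$ satisfies $\P$.
\item For (2), $R$ is Cohen--Macaulay local and $\P$ is maximal Cohen--Macaulayness or Ulrichness. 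Corollary \ref{7} applies to the same family and yields that $M$ satisfies $\P$.
\end{enumerate}

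Equivalently, if $\X$ denotes the subcategory of modules satisfying $\P$, the hypothesis says that $\{M\mid\I_R(M)<\infty\}\subseteq\X$. Monotonicity of $\red(-)$, which is evident from Definition \ref{maindef2}, gives $\{M\mid\ri_R(M)<\infty\}=\red\{M\mid\I_R(M)<\infty\}\subseteq\red\X$. Finally $\red\X=\X$ by Theorems \ref{2} and \ref{6} and Corollary \ref{9}.

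There is essentially no obstacle. The only care needed is that Definition \ref{maindef} treats $r=0$ separately, so the two cases must be handled as above. Note also that Corollaries \ref{7} and \ref{8} are stated for arbitrary families with $r\ge0$, so they cover the positive case uniformly.
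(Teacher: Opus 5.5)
Your proof is correct and follows the same route as the paper, which states the corollary as an immediate consequence of Corollaries \ref{7} and \ref{8}; your case split on $\ri_R(M)=0$ versus $\ri_R(M)>0$ simply spells that deduction out. Your reformulation $\{M\mid\ri_R(M)<\infty\}=\red\{M\mid\I_R(M)<\infty\}\subseteq\red\X=\X$ is the same argument phrased through the closure operator, as in the proofs of Proposition \ref{4} and Corollary \ref{8}.
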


At the end of the paper, we list some classes of modules that we can apply Corollary \ref{13}.

\begin{rem}
For an $R$-module $M$, there is the following hierarchy of homological dimensions: 
$$\pd M \geq \CIdim M \geq \ABdim M  \geq \Gdim M.$$
See \cite[Theorem 1.2]{A17} for instance. 
Therefore we also have the following: 
$$
\rpd M \geq \rCIdim M \geq \rABdim M  \geq \rGdim M. 
$$
\end{rem}

Notice that we have inclusions of subcategories $\uac (R) \subseteq \ac (R) \subseteq \garc (R)$.  
See \cite[Theorem 1.2]{CT}. 

\begin{cor}\label{E1}
Let $M$ be an $R$-module. 
If $\rCIdim M < \infty$, then $M$ satisfies the uniform Auslander condition. 
Consequently, both the Auslander--Reiten conjecture and the generalized Auslander--Reiten conjecture hold. 
Moreover $M$ satisfies dependence of the total reflexivity conditions. 
\end{cor}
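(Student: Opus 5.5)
The plan is to apply Corollary \ref{13}(1) with $\I_R=\CIdim$. That reduces everything to the known case: every module $M$ with $\CIdim_R M<\infty$ satisfies each of the properties in question.

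\textbf{Step 1: the base case $\CIdim M<\infty$.}
\begin{itemize}
\item Uniform Auslander condition. I would invoke the known result (Avramov--Buchweitz / Christensen--Holm type; cf. \cite{CT}) that a module of finite CI-dimension has the following property. If $\Ext_R^{\gg0}(M,N)=0$ for some $N\ne0$, then $\Ext_R^{>\CIdim M}(M,N)=0$. This is an Auslander-type vanishing theorem for finite CI-dimension, and the bound $\CIdim M$ depends only on $M$. Non-vanishing at the top degree is the delicate part, so I would cite it rather than reprove it.
\item Consequences via inclusions. By \cite[Theorem 1.2]{CT} we have $\uac(R)\subseteq\ac(R)\subseteq\garc(R)$. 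So once $M$ is in $\uac(R)$, it also satisfies the Auslander condition and the generalized Auslander--Reiten conjecture. The classical Auslander--Reiten conjecture for $M$ is the case $n=0$ of the generalized one.
\item Dependence of total reflexivity. Finite CI-dimension implies finite G-dimension, by the hierarchy $\CIdim M\ge\Gdim M$ recorded above. For a module of finite G-dimension, $\Gdim M=\sup\{i\mid\Ext^i_R(M,R)\ne0\}$. Hence $\Ext_R^{>0}(M,R)=0$ forces $\Gdim M=0$, i.e. $M$ is totally reflexive.
\end{itemize}

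\textbf{Step 2: passing to reducing CI-dimension.} Apply Corollary \ref{13}(1) with $\I_R=\CIdim$ and $\P$ ranging over the four properties. This gives that every $M$ with $\rCIdim M<\infty$ satisfies the uniform Auslander condition and dependence of total reflexivity. The Auslander condition and the generalized Auslander--Reiten conjecture then follow, either again from Corollary \ref{13} or from the inclusions in \cite{CT}.

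\textbf{Main obstacle.} The only real content is the base case for the uniform Auslander condition, in particular the non-vanishing $\Ext_R^{b_M}(M,N)\neq0$. This needs the precise form of the vanishing theorem for modules of finite CI-dimension, namely that the last nonvanishing Ext occurs exactly at a uniform index. My first attempt would be to take this from the literature \cite{CT}, where modules of finite CI-dimension are shown to satisfy the uniform Auslander condition. Everything after that is formal, by Theorem \ref{2} and Corollary \ref{9}.
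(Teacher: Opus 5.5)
Your reduction is sound. Corollary \ref{13}(1) with $\I_R=\CIdim$ does pass the properties from finite CI-dimension to finite reducing CI-dimension. Your base case for dependence of the total reflexivity conditions is also correct: finite CI-dimension gives finite G-dimension, and then $\Gdim_RM=\sup\{i\mid\Ext^i_R(M,R)\neq0\}$.

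The gap is the base case for the uniform Auslander condition, which is the only nontrivial input, and you never establish it. You need one integer $b$, independent of $N$, such that $\Ext_R^{>b}(M,N)=0$ \emph{and} $\Ext_R^{b}(M,N)\neq0$ for every $N\neq0$ with $\Ext_R^{\gg0}(M,N)=0$. You defer the non-vanishing to \cite{CT}. However, the paper uses \cite[Theorem 1.2]{CT} only for the inclusions $\uac(R)\subseteq\ac(R)\subseteq\garc(R)$, and nothing supports that it proves the uniform Auslander condition for finite CI-dimension. As written, this step is a hoped-for citation rather than an argument.

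The paper avoids needing this at all. It uses \cite[2.7]{CDKM22} to get $\rpd M<\infty$ from $\rCIdim M<\infty$ (equivalently, by Theorem \ref{3}, modules of finite CI-dimension lie in $\red(\fpd R)$). It then applies Corollary \ref{13} with $\I_R=\pd$, where the base case is elementary. If $\pd_RX=p$, then $\Ext_R^{>p}(X,-)=0$. For $N\neq0$, the module $\Ext^p_R(X,N)$ is the cokernel of a map $N^{\beta_{p-1}}\to N^{\beta_p}$ given by a matrix with entries in the maximal ideal, hence nonzero by Nakayama.

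If you prefer to keep your route, you can prove your base case directly. After a faithfully flat base change along a quasi-deformation, assume $R=Q/(x_1,\dots,x_c)$ with $\pd_QM<\infty$. Put $Q_j=Q/(x_1,\dots,x_j)$ and use the change-of-rings sequence
\[
\cdots\to\Ext^n_{Q_j}(M,N)\to\Ext^n_{Q_{j-1}}(M,N)\to\Ext^{n-1}_{Q_j}(M,N)\to\Ext^{n+1}_{Q_j}(M,N)\to\Ext^{n+1}_{Q_{j-1}}(M,N)\to\cdots.
\]
Eventual vanishing over $Q_j$ gives eventual vanishing over $Q_{j-1}$. If $e$ is the top non-vanishing degree over $Q_{j-1}$, then $\Ext^n_{Q_j}(M,N)\to\Ext^{n+2}_{Q_j}(M,N)$ is an isomorphism for $n\ge e$. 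This forces $\Ext^{\ge e}_{Q_j}(M,N)=0$, and then $\Ext^{e-1}_{Q_j}(M,N)\cong\Ext^{e}_{Q_{j-1}}(M,N)\neq0$. Starting from $e=\pd_QM$ (Nakayama again) and inducting up to $R$ yields $b_M=\pd_QM-c=\CIdim_RM$, which is exactly the statement you wanted to cite.

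Both routes need $R$ local for the non-vanishing. Over $R_1\times R_2$, the projective module $R_1\times0$ satisfies $\Ext^i_R(R_1\times0,\,0\times R_2)=0$ for all $i\ge0$.
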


\begin{proof}
Let $X$ be of finite projective dimension. 
Then it is clear that $X$ satisfies the uniform Auslander condition. 
Indeed $\Ext_R ^{> \pd X} (X, N)=0$ for each $R$-module $N$. 
Thus $\fpd (R)$ is contained in $\uac(R)$, so that $\fpd (R) \subseteq \uac (R) \subseteq \ac (R) \subseteq \garc (R)$. 
It is also clear that $X$ satisfies dependence of the total reflexivity conditions. 
Because if $\Ext _R ^{>0} (X, R) = 0$ then $X$ must be projective,  which implies that $X$ is totally reflexive. 

By Corollary \ref{13}, if $\rpd M < \infty$ then $M$ satisfies the uniform Auslander condition, the generalized Auslander--Reiten conjecture and the generalized Auslander--Reiten conjecture. 
Let $M$ be an $R$-module with $\rCIdim M < \infty$. 
Then $\rpd M < \infty$ (\cite[2.7]{CDKM22}). 
Applying the above observation, we are done. 
\end{proof}

\begin{cor}\label{E2}
Let $M$ be an $R$-module.   
\begin{enumerate}[\rm(1)]

\item Suppose that $\rABdim M < \infty$. 
Then $M$ satisfies the generalized Auslander--Reiten conjecture. 
 
\item Suppose that $\rGdim M < \infty$. 
Then $M$ satisfies dependence of the total reflexivity conditions. 

\end{enumerate}
\end{cor}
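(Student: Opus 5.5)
By Corollary \ref{13}(1), applied with $\I_R=\ABdim$ in part (1) and $\I_R=\Gdim$ in part (2), it suffices to check the corresponding property for modules $X$ with $\I_R(X)<\infty$. The reducing closure then transfers the property to every module of finite reducing invariant.

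For (1), let $X$ satisfy $\ABdim X<\infty$, and suppose that $\Ext_R^{>n}(X,X\oplus R)=0$ for some $n\ge0$. I will invoke the known fact that modules of finite Auslander--Buchsbaum dimension satisfy the generalized Auslander--Reiten conjecture (see \cite{A17}). Concretely, finite AB-dimension gives finite G-dimension together with an Auslander condition on the self-extension vanishing. The vanishing $\Ext_R^{\gg0}(X,X)=0$ then forces $\pd X<\infty$. Once $\pd X<\infty$, the vanishing $\Ext_R^{>n}(X,R)=0$ yields $\pd X\le n$, because $\pd X=\sup\{i\mid\Ext^i_R(X,R)\ne0\}$ for modules of finite projective dimension. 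Hence $\fab$-type modules, i.e.\ those with $\ABdim X<\infty$, lie in $\garc(R)$, and Corollary \ref{13}(1) with $\P$ equal to the generalized Auslander--Reiten conjecture finishes (1). This citation is the main obstacle. The fact that finite AB-dimension together with eventual vanishing of $\Ext(X,X)$ gives finite projective dimension is exactly the content of Araya's paper \cite{A17}, and I would quote it rather than reprove it.

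For (2), let $X$ satisfy $\Gdim X<\infty$ and $\Ext_R^{>0}(X,R)=0$. Since the G-dimension equals $\sup\{i\mid\Ext_R^i(X,R)\ne0\}$ when it is finite, we get $\Gdim X=0$, so $X$ is totally reflexive. Alternatively, this is the case $r=0$ of the defining implication of Theorem \ref{12}, which every module of finite G-dimension satisfies. Thus $\fgd R\subseteq\dtrc(R)$. Corollary \ref{9} says $\dtrc(R)$ is reducingly closed, so $\red(\fgd R)\subseteq\dtrc(R)$. By Proposition \ref{4}, $\red(\fgd R)$ is exactly the class of modules with $\rGdim M<\infty$, which proves (2).
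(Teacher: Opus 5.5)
Your proposal is correct and follows the paper's own proof. In both parts you check that modules of finite $\ABdim$ (via \cite[Corollary 2.8]{A17}), respectively finite $\Gdim$, satisfy the property, and then transfer it to finite reducing invariant via Corollary \ref{13} (equivalently, Corollary \ref{9} together with Proposition \ref{4}); your added step deducing $\pd X\le n$ from $\Ext_R^{>n}(X,R)=0$ once $\pd X<\infty$ is a small precision the paper leaves implicit.
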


\begin{proof}
(1) Let $\fab (R)$ be a subcategory of $\mod R$ consisting of $R$-modules of finite AB dimension. 
Let $X \in \fab (R)$ and assume that $\hExt (X, X \oplus R) = 0$. 
In particular, $\hExt (X, X) = 0$. 
Then $\pd X < \infty$ (cf. \cite[Corollary 2.8]{A17}).  
Thus $\fab (R)$ satisfies the generalized Auslander--Reiten conjecture.  
By Corollary \ref{13}, if $\rABdim M < \infty$ and $\hExt(M, M \oplus R) = 0$, then $\pd M < \infty$.

(2) Let $X \in \fgd (R)$, that is $\Gdim X < \infty$. 
Suppose that $\Ext_R ^{>0} (X, R) = 0$. 
Then $\Gdim X = 0$, so that $X$ is totally reflexive. 
Hence $ \fgd (R)$ satisfies dependence of the total reflexivity conditions.
By Corollary \ref{13}, we obtain the assertion.  
\end{proof}

By the definition, if $M$ has reducible complexity, then $M$ has finite reducing projective dimension. 
Therefore we have the following. 

\begin{cor}\label{E3}
Let $M$ be an $R$-module with reducible complexity. 
Then $M$ satisfies the (uniform) Auslander condition, the (generalized) Auslander--Reiten conjecture and dependence of the total reflexivity conditions.
\end{cor}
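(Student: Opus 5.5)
The plan is to reduce Corollary \ref{E3} to Corollary \ref{E1}, or directly to Corollary \ref{13}(1) with $\I_R=\pd_R$. The paper has just noted that a module with reducible complexity has finite reducing projective dimension. So the first step is to make this explicit. By Bergh's definition, a module $M$ of reducible complexity comes with a finite chain $M=K_0,K_1,\dots,K_r$ of modules of strictly decreasing complexity, ending in $K_r$ with $\pd K_r<\infty$. Each step is given by a short exact sequence $0\to K_{i-1}\to K_i\to\syz^{n_i}K_{i-1}\to0$, which is the shape in Definition \ref{maindef} with $a_i=b_i=1$. Hence $M\in\sred(\fpd R)\subseteq\red(\fpd R)$, and by Proposition \ref{4} this says $\rpd M<\infty$.

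Second, I would check that every module of finite projective dimension satisfies the four properties; this is the argument in the proof of Corollary \ref{E1}. If $\pd X<\infty$, then $\Ext^{>\pd X}_R(X,-)=0$. For the uniform Auslander condition one takes $b_X=\pd X$; nonvanishing of $\Ext^{\pd X}_R(X,N)$ for $N\neq0$ is the standard fact coming from the last map of a minimal free resolution. The Auslander condition and the generalized Auslander--Reiten conjecture then follow from the inclusions $\uac(R)\subseteq\ac(R)\subseteq\garc(R)$ of \cite{CT}. Dependence of the total reflexivity conditions holds because $\Ext^{>0}_R(X,R)=0$ together with $\pd X<\infty$ forces $X$ to be projective.

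Third, I apply Corollary \ref{13}(1), or equivalently Corollary \ref{8} along the chain $K_0,\dots,K_r$, with $\P$ running over these properties. This shows that $M$ satisfies each of them. The ordinary Auslander--Reiten conjecture is the special case $n=0$ of the generalized version.

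The only point needing care is the first step: matching the precise form of the defining sequences for reducible complexity, namely the positions of the syzygy and any shift conventions, with the sequences in Definition \ref{maindef}. If Bergh's sequences are written as $0\to K_{i-1}\to K_i\to\syz^{n}K_{i-1}\to0$, they match directly. If instead they appear as $0\to K_i\to K_{i-1}\to\syz^{n}K_{i}\to 0$ or with a syzygy shift, I would reindex the chain or replace modules by suitable syzygies. This is harmless, because every property in question is invariant under the operations used, by Theorem \ref{2} and Corollary \ref{9}. Everything else is formal.
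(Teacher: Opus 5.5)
Your proposal is correct and takes the same route as the paper: reducible complexity gives $\rpd M<\infty$ (Bergh's defining sequences already have the form $0\to K_{i-1}\to K_i\to\syz^{n_i}K_{i-1}\to0$, so no reindexing is needed), modules of finite projective dimension satisfy each property, and Corollary \ref{13}(1), or equivalently Corollary \ref{8} applied along the chain, transfers the properties to $M$, exactly as in the proof of Corollary \ref{E1}. One thing you do better than the paper: your Nakayama argument that $\Ext^{\pd X}_R(X,N)\neq0$ for $X,N\neq0$ over a local ring supplies the nonvanishing half of the uniform Auslander condition, whereas the paper's proof of Corollary \ref{E1} only notes that $\Ext^{>\pd X}_R(X,N)=0$.
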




\end{document}